\definecolor{wb}{RGB}{51,153,255}
\numberwithin{equation}{subsection}
\newcommand{\defeq}{\vcentcolon=}
\newcommand{\eqdef}{=\vcentcolon}
\def\moverlay{\mathpalette\mov@rlay}
\def\mov@rlay#1#2{\leavevmode\vtop{%
   \baselineskip\z@skip \lineskiplimit-\maxdimen
   \ialign{\hfil$\m@th#1##$\hfil\cr#2\crcr}}}
\newcommand{\charfusion}[3][\mathord]{
    #1{\ifx#1\mathop\vphantom{#2}\fi
        \mathpalette\mov@rlay{#2\cr#3}
      }
    \ifx#1\mathop\expandafter\displaylimits\fi}
\newcommand{\cupdot}{\charfusion[\mathbin]{\cup}{\cdot}}
\newcommand{\bigcupdot}{\charfusion[\mathop]{\bigcup}{\cdot}}
\DeclareFontFamily{U}{mathb}{\hyphenchar\font45}
\DeclareFontShape{U}{mathb}{m}{n}{
      <5> <6> <7> <8> <9> <10> gen * mathb
      <10.95> mathb10 <12> <14.4> <17.28> <20.74> <24.88> mathb12
      }{}
\DeclareSymbolFont{mathb}{U}{mathb}{m}{n}
\DeclareMathSymbol{\precneq}{3}{mathb}{"AC}
\DeclareMathSymbol{\varprec}{3}{mathb}{"A0}
\newtheoremstyle{definitions}
 	{\topsep}% measure of space to leave above the theorem. E.g.: 3pt
	{\topsep}% measure of space to leave below the theorem. E.g.: 3pt
	{}% name of font to use in the body of the theorem
	{}% measure of space to indent
	{\bfseries}% name of head font
	{:}% punctuation between head and body
	{.5em}% space after theorem head; " " = normal interword space
	{}
\newtheoremstyle{lemmata}
	{\topsep}% measure of space to leave above the theorem. E.g.: 3pt
	{\topsep}% measure of space to leave below the theorem. E.g.: 3pt
	{\itshape} %{\slshape}% name of font to use in the body of the theorem
	{}% measure of space to indent
	{\bfseries}% name of head font
	{:}% punctuation between head and body
	{.5em}% space after theorem head; " " = normal interword space
	{}
\theoremstyle{lemmata}
\newtheorem{Theorem}[subsection]{Theorem}
\newtheorem{Corollary}[subsection]{Corollary}
\newtheorem{Proposition}[subsection]{Proposition}
\theoremstyle{definitions}
\newtheorem{Remark}[subsection]{Remark}
\newtheorem{Remarks}[subsection]{Remarks}
\newtheorem*{Remarks-nn}{Remarks}
\DeclareMathOperator{\GL}{GL}
\DeclareMathOperator{\aut}{aut}
\DeclareMathOperator{\sgn}{sgn}
\DeclareMathOperator{\supp}{supp}
\title{Modular forms for \(\GL(r, \mathds{F}_{q}[T])\): \(t\)-expansions of the basic forms}
\author{Ernst-Ulrich Gekeler \\ Universität des Saarlandes \\ \lowercase{\href{mailto:gekeler@math.uni-sb.de}{gekeler@math.uni-sb.de}}}
\address{FR 6.1 Mathematik, Universität des Saarlandes, Postfach 15 11 50 D-66041 Saarbrücken}
\email{gekeler@math.uni-sb.de}
\date{\today}
\subjclass{MSC Primary 11F52; Secondary 11G09}
\keywords{Drinfeld modular forms, Eisenstein series, coefficient forms, \(t\)-expansions}
\begin{document}

\begin{abstract}
	We give closed formulas for the first few expansion coefficients of the basic modular forms for \(\GL(r, \mathds{F}_{q}[T])\). Here the rank \(r\)
	is larger or equal to \(3\), and the forms in question include the coefficient forms \(g_{1}, \dots, g_{r}\) and the Eisenstein series
	\(E_{q^{i}-1}\) (\(i \in \mathds{N}\)).	
\end{abstract}

\maketitle

\setcounter{section}{-1}

\section{Introduction}

In recent years, Drinfeld modular forms for groups like \(\Gamma \defeq \GL(r, \mathds{F}_{q}[T])\) have been introduced and investigated in more 
detail in \cite{BassonBreuerPink24} and in the author's series of papers starting with \cite{Gekeler17}, \cite{Gekeler22}, \cite{Gekeler22-2}, see also \cite{Gekeler25-2}. As in the case of rank \(r=2\), where a large
amount of literature exists, it is desirable to dispose of examples and numerical data allowing a deeper arithmetical study. For that case, see
e.g. the works of Bosser \cite{Bosser02}, Choi \cite{Choi08}, \cite{ChoiIm14}, Lopez \cite{Lopez10}, Vincent \cite{Vicent10}, Armana \cite{Armana11} and, more recently, of Bandini-Valentino \cite{BandiniValentino23} and Dalal-Kumar \cite{DalalKumar23}.
With the present paper, the author aims to provide some information about the \(t\)-expansions of the most basic modular forms for \(\Gamma\), namely
the Eisenstein series \(E_{q^{k}-1}\) and the coefficient forms \(g_{k}\), both of weight \(q^{k}-1\) and type \(0\). 

Each modular form for \(\Gamma\) has a \(t\)-expansion
\[
	f(\boldsymbol{\omega}) = \sum_{n \geq 0} a_{n}(\boldsymbol{\omega}')t^{n}(\boldsymbol{\omega}),
\]
where \(a_{n}(\boldsymbol{\omega}')\) is a weak modular form of rank \(r' \defeq r-1\), and \(\boldsymbol{\omega}'=(\omega_{2}, \dots, \omega_{r})\)
is the tail of \(\boldsymbol{\omega} = (\omega_{1}, \dots, \omega_{r})\), a point of the Drinfeld space \(\Omega^{r}\). We are able to give closed
formulas for the \(a_{n}\) if 
\begin{enumerate}[label=(\alph*)]
	\item \(f = E_{q^{k}-1}\) and \(n < 3(q-1)q^{k+r-2}\), see Theorem \ref{Theorem.Expansion-for-Eqk-1};
	\item \(f = g_{k}\) and \(n < 3(q-1)q^{r-1}\) (Theorem \ref{Theorem.gk-as-power-series-in-t});
\end{enumerate}
and, in a less satisfactory manner, if 
\begin{enumerate}[label=(\alph*)]\setcounter{enumi}{2}
	\item \(f = h\), the \((q-1)\)-th root of the discriminant form \(\Delta = g_{r}\), and \(n \leq (q-1)(q^{2r-1}-q^{r})\) (Theorem \ref{Theorem.h-function-as-power-series-in-t}).
\end{enumerate}
For (a) and (b) we have to assume \(q>2\), otherwise (as \(\sum_{c \in \mathds{F}_{q}} c \neq 0\) for \(q=2\), in contrast with \(q>2\)) some annoying
extra terms appear that lead to weaker error estimates.

In case (a), there are precisely \(k+1\) non-vanishing coefficients \(a_{n}\) below the given bound with \(n = q^{k}-q^{i}\) (\(0\leq i \leq k\));
in case (b), there are precisely \(2k\) (if \(k<r\)) or \(2r-1\) (if \(k=r\)) non-vanishing \(a_{n}\) below the bound, all with \(n \leq q^{k}-1\);
in case (c), we have no similar description. 

For (a), we start with the known \(A\)-expansion involving
\[
	\sum_{a \in A \text{ monic}} G_{q^{k}-1}(t_{a})
\]
of Theorem \ref{Theorem.Eisenstein-series-Eqk-1-t-expansion}. Due to the structure of the quantities \(t_{a}\) and of the Goss polynomials 
\(G_{q^{k}-1}\), there is heavy cancellation in the partial sums
\[
	\sum_{a \text{ monic of degree } d} G_{q^{k}-1}(t_{a}),
\]
which leads to strong estimates and thereby to Theorem \ref{Theorem.Expansion-for-Eqk-1}. For (b) we could try to use (a) and the well-known
relationship \eqref{Eq.Logarithm-and-Drinfeld-module} between the \(E_{q^{i}-1}\) and the \(g_{j}\). This, however, turns out hopelessly 
complicated. Instead, we use the presentation of the generic Drinfeld modulue \(\phi = \phi^{\boldsymbol{\omega}}\) of rank \(r\) (where
\(\boldsymbol{\omega}\) varies through the Drinfeld symmetric space \(\Omega^{r}\)) as a Tate object over \(\phi' = \phi^{\boldsymbol{\omega}'}\),
its counterpart of rank \(r' = r-1\). This leads to the more efficient recursion formula \eqref{Eq.Recursion-formula-gk} for the \(g_{k}\), which
is better adapted to our problem. Together with Proposition \ref{Proposition.E-qi-1-in-R} (a technical estimate for the Tate object), it allows
to show the formula \eqref{Eq.Closed-form-gk} for the non-vanishing coefficients, that is, Theorem \ref{Theorem.gk-as-power-series-in-t}.

The situation is different for (c), i.e., for the modular form \(h\) of weight \((q^{r}-1)/(q-1)\) and type 1. In contrast with the Eisenstein
series \(E_{q^{k}-1}\) and the coefficient forms \(g_{k}\) (whose \(t\)-expansions are sparse), but in accordance with the case of rank \(2\)
(see \cite{Gekeler99}), \(h\) has \enquote{many} non-vanishing coefficients. We restrict to estimating the contributions of monic \(a \in A\) of degree
\(\geq 2\) in the product formula
\[
	h = (h')^{q} t \prod_{a \in A \text{ monic}} S_{a}(t)^{q^{r}-1}.
\]
This leads to the error estimate in Theorem \ref{Theorem.h-function-as-power-series-in-t}, where we replace the above product by
\[
	(h')^{q}t \prod_{a \text{ monic of degree } 1} S_{a}(t)^{-1}.
\]
So our results are twofold: determining the first few terms of an expansion, and estimating the error, that is the length of the following string
of vanishing coefficients. While for (a) and (c) the first task is easy, as we already dispose of the expansions 
\eqref{Eq.Eisenstein-series-Eqk-1-t-expansion} and \eqref{Eq.Discriminant-form-g-r-product-expansion} and we are reduced to work out the 
error estimates, this is different for (b). Here, the closed form of the starting string given in Theorem \ref{Theorem.gk-as-power-series-in-t} 
is far from obvious, and could be guessed only after calculating a number of examples. Therefore Theorem \ref{Theorem.gk-as-power-series-in-t} 
should be regarded as the principal result of the paper.

We assume throughout that the rank \(r\) is larger or equal to \(3\), which is not a serious restriction. In the rank-\(2\) case there is a 
plentitude of literature with partially much sharper results than those that could be obtained from the methods of the present paper. Again
in the case of rank \(2\), usually an arithmetic normalization of the uniformizer \(t\) is made, where the period \(\overline{\pi}\) of the Carlitz
module (the counterpart to the classical \(2\pi\imath\)) enters. Such a normalization is also possible in the present higher rank case 
(see \cite{Sugiyama18}), but applying it would lead us too far away, and would obscure the overall picture. This is why we decided to use (as in \cite{BassonBreuerPink24} or \cite{Gekeler22-2})
the natural geometric normalization \eqref{Eq.Nowhere-vanishing-holomorphic-function-on-Omega} of \(t\).

\section{Modular forms and their expansions}

(See, e.g. \cite{Gekeler17}, \cite{Gekeler22-2}, \cite{Gekeler88}.)

\subsection{} Throughout, \(\mathds{F} = \mathds{F}_{q}\) is the finite field with \(q\) elements, of characteristic \(p\), and \(r \geq 3\)
a natural number, the \textbf{rank}. Both \(q\) and \(r\) are fixed and usually omitted from notation.

\(A = \mathds{F}[T]\) (resp. \(K = \mathds{F}(T)\)) is the polynomial ring (resp. the field of rational functions) in an indeterminate \(T\).
Further, \(K_{\infty} = \mathds{F}((T^{-1}))\) is the completion of \(K\) at its infinite place, with completed algebraic closure \(C_{\infty}\).
The absolute value \(\lvert \mathbin{.} \rvert\) on \(C_{\infty}\) is normalized such that \(\lvert T \rvert = q\).
\[
	\Omega = \Omega^{r} = \mathds{P}^{r-1}(C_{\infty}) \smallsetminus \bigcup H(C_{\infty})
\]
(where \(H\) runs through the hyperplanes defined over \(K_{\infty}\)) with its structure as a rigid-analytic space denotes the Drinfeld symmetric
space. Homogeneous coordinates \((\omega_{1} : \omega_{2} : \ldots : \omega_{r})\) are normalized such that \(\omega_{r} = 1\), hence
\[
	\Omega = \{ \boldsymbol{\omega} \in C_{\infty}^{r} \mid \omega_{1}, \dots, \omega_{r}~K_{\infty}\text{-linearly independent and } \omega_{r} = 1\}.
\]
\subsection{} The group \(\Gamma \defeq \GL(r,A)\) acts on \(\Omega\) through fractional linear transformations
\begin{equation}
	\gamma \boldsymbol{\omega} = \aut(\gamma, \boldsymbol{\omega})^{-1} \gamma \begin{pmatrix} \omega_{1} \\ \vdots \\ \omega_{r} \end{pmatrix},
\end{equation}
where the right hand side is the matrix product and 
\begin{equation}
	\aut(\gamma, \boldsymbol{\omega}) \defeq \sum_{1 \leq i \leq r} \gamma_{r,i} \omega_{i}.
\end{equation}
A holomorphic function \(f\) on \(\Omega\) is a \textbf{weak modular form} (or \textbf{weakly modular}) of \textbf{weight} \(k \in \mathds{Z}\) and
\textbf{type} \(m \in \mathds{Z}/(q-1)\) if 
\begin{equation}\label{Eq.Weak-modular-form}
	f(\gamma \boldsymbol{\omega}) = \aut(\gamma, \boldsymbol{\omega})^{k} (\det \gamma)^{{-}m} f(\boldsymbol{\omega})
\end{equation}
holds for \(\gamma \in \Gamma\) and \(\boldsymbol{\omega} \in \Omega\).

\subsection{}\label{Subsection.A-lattice}% 
An \(A\)-lattice in \(C_{\infty}\) is a finitely generated (thus free of some rank \(\rho\)) \(A\)-submodule \(\Lambda\) of 
\(C_{\infty}\) which is discrete in the sense that \(\Lambda\) intersects with each ball in at most finitely many points. Hence
\(\Lambda = \bigoplus_{1 \leq i \leq \rho} A\omega_{i}\) with \(\rho\) \(K_{\infty}\)-linearly independent elements \(\omega_{i}\). With each lattice
\(\Lambda\), we associate
\begin{itemize}
	\item its \textbf{exponential function}
	\[
		e^{\Lambda}(z) = z \sideset{}{^{\prime}}\prod_{\lambda \in \Lambda} (1 - z/\lambda)
	\]
	(the primed product \(\sideset{}{^{\prime}}\prod\) (resp. sum \(\sideset{}{^{\prime}}\sum\)) is over the non-zero elements of the index set).
	It is everywhere convergent and has a power series expansion
	\begin{equation}
		e^{\Lambda}(z) = \sum_{i \geq 0} \alpha_{i}(\Lambda) z^{q^{i}};
	\end{equation}
	\item the \textbf{Eisenstein series}
	\[
		E_{k}(\Lambda) = \sideset{}{^{\prime}} \sum_{\lambda \in \Lambda} \lambda^{{-}k}
	\]
	(\(k > 0\) and \(k \equiv 0 \pmod{q-1}\)); otherwise it vanishes);
	\item the \textbf{Drinfeld module} \(\phi^{\Lambda} = C_{\infty}/\Lambda\) of rank \(\rho\).
\end{itemize}
The latter is an exotic \(A\)-module structure given by the \(a\)-th operator polynomial \(\phi_{a}^{\Lambda}\) for each \(0 \neq a \in A\), which
makes the following diagram with exact rows commutative:
\begin{equation}\label{Eq.Exotic-A-module-structure}
	\begin{tikzcd}
		0 \ar[r]	& \Lambda \ar[d, "a"] \ar[r]		& C_{\infty} \ar[d, "a"] \ar[r, "e^{\Lambda}"]	& C_{\infty} \ar[d, "\phi_{a}^{\Lambda}"] \ar[r]	& 0 \\
		0 \ar[r]	& \Lambda \ar[r]					& C_{\infty} \ar[r, "e^{\Lambda}"]				& C_{\infty}	 \ar[r]									& 0
	\end{tikzcd}
\end{equation}
\(\phi^{\Lambda}\) is uniquely determined already by \(\phi_{T}^{\Lambda}\), which has shape 
\begin{equation}\label{Eq.Drinfeld-module}
	\phi_{T}^{\Lambda}(X) = TX + g_{1}(\Lambda)X^{q} + \cdots + g_{\rho}(\Lambda) X^{q^{\rho}}.
\end{equation}
The only restriction on \(\phi_{T}^{\Lambda}\) is that the \textbf{discriminant}
\[
	\Delta(\phi^{\Lambda}) \defeq \Delta(\Lambda) \defeq g_{\rho}(\Lambda)
\]
be non-zero.

\subsection{}\label{Subsection.Ring-of-formal-power-series-in-non-commutative-variable}% 
Let \(C_{\infty}\{\{ \tau \}\}\) be the ring of formal power series in a non-commutative variable \(\tau\) with commutation rule
\begin{align}
	\tau c = c^{q} \tau &&\text{for } c \in C_{\infty},
\end{align}
with sub-ring \(C_{\infty}\{\tau\}\) of polynomials. Then \(C_{\infty}\{\{\tau\}\}\) may be identified via \(\tau^{n} \leadsto z^{q^{n}}\) with the
ring of \(\mathds{F}\)-linear power series of shape \(\sum c_{n}z^{q^{n}}\), where the product of the latter is given by insertion. The map
\begin{equation}
	\phi^{\Lambda} \colon a \longmapsto \phi_{a}^{\Lambda}
\end{equation}
is a homomorphism of \(\mathds{F}\)-algebras from \(A\) to \(C_{\infty}\{\tau\}\), and we can write 
\begin{equation}
	e^{\Lambda}(z) = \sum \alpha_{i}z^{q^{i}} = \sum \alpha_{i}\tau^{i}, \qquad \text{where } \alpha_{i} = \alpha_{i}(\Lambda).
\end{equation}
Define the series \((\beta_{i})_{i \in \mathds{N}_{0}}\) by \(\beta_{i} = {-}E_{q^{i}-1}(\Lambda)\) (with \(E_{0}(\Lambda) = {-}1\), so 
\(\beta_{0} = 1\)) and \(\log^{\Lambda} \in C_{\infty}\{\{\tau\}\}\) by
\begin{equation}
	\log^{\Lambda} = \sum \beta_{i} \tau^{i}.
\end{equation}
Then 
\begin{equation}\label{Eq.Logarithm-series-and-exponential-series-are-inverse}
	e^{\Lambda} \cdot \log^{\Lambda} = \log^{\Lambda} \cdot e^{\Lambda} = 1 \quad \text{in} \quad C_{\infty}\{\{\tau\}\},
\end{equation}
i.e., the two series are mutual inverses. The equation
\[
	\phi_{T}^{\Lambda} \cdot e^{\Lambda} = e^{\Lambda} \cdot T
\]
from \eqref{Eq.Exotic-A-module-structure} leads to 
\begin{equation}\label{Eq.Logarithm-and-Drinfeld-module}
	\log^{\Lambda} \cdot \phi_{T}^{\Lambda} = T \cdot \log^{\Lambda}.
\end{equation}
Comparing coefficients in \eqref{Eq.Logarithm-series-and-exponential-series-are-inverse} gives 
\begin{equation}\label{Eq.Coefficients-for-logarithm}
	\sum_{i+j=k} \alpha_{i}\beta_{j}^{q^{i}} = \sum_{i+j=k} \beta_{i} \alpha_{j}^{q^{i}} = \begin{cases} 1,	&\text{if } k=0, \\ 0,	&\text{if } k > 0. \end{cases}
\end{equation}
Similarly, from \eqref{Eq.Logarithm-and-Drinfeld-module},
\begin{align}\label{Eq.Second-Coefficients-for-logarithm}
	\sum_{i+j=k} \beta_{i} g_{j}^{q^{i}} = T\beta_{k}	&&(k \geq 0),
\end{align}
where \(g_{0} = T\) and all the \(\alpha_{i}\), \(\beta_{j}\), \(g_{k}\) depend on \(\Lambda\). These formulas allow to recursively calculate any
two of the series \((\alpha_{i})\), \((\beta_{j})\), \((g_{k})\) from the third. For example,
\begin{equation}\label{Eq.g_k}
	g_{k} = {-}\sum_{1 \leq j < k} \beta_{k-j}g_{j}^{q^{k-j}} - [k] \beta_{k} = \sum_{1 \leq j < k} E_{q^{k-j}-1}q_{j}^{q^{k-j}} + [k]E_{q^{k}-1},
\end{equation}
where \([k]\) is short for \(T^{q^{k}}-T \in A\) and \(E_{\ell} = E_{\ell}(\Lambda)\).

\subsection{}\label{Subsection.Lattice-Lambda-omega} For \(\boldsymbol{\omega} \in \Omega\) let \(\Lambda_{\boldsymbol{\omega}}\) be the 
lattice \(\sum_{1 \leq i \leq r} A\omega_{i}\). We further let \(\boldsymbol{\omega}' = (\omega_{2}, \dots, \omega_{r}) \in \Omega' = \Omega^{r-1}\) 
and \(\Lambda' = \Lambda_{\boldsymbol{\omega}'} = \sum_{2 \leq i \leq r} A\omega_{i}\). Consider the function on \(\Omega\):
\begin{equation}\label{Eq.Nowhere-vanishing-holomorphic-function-on-Omega}\stepcounter{subsubsection}%
	t(\boldsymbol{\omega}) = (e^{\Lambda'}(\omega_{1}))^{-1}.
\end{equation}
It is well-defined, holomorphic and nowhere vanishing on \(\Omega\), and satisfies 
\begin{equation}\stepcounter{subsubsection}%
	t(\widetilde{\boldsymbol{\omega}}) = t(\boldsymbol{\omega}) \quad \text{if} \quad \widetilde{\boldsymbol{\omega}} = (\omega_{1} + \lambda', \omega_{2}, \dots, \omega_{r}) \quad \text{for} \quad \lambda' \in \Lambda'.
\end{equation}
If \(f\) is a weak modular form then \eqref{Eq.Weak-modular-form} implies that \(f\) satisfies the same rule 
\(f(\widetilde{\boldsymbol{\omega}}) = f(\boldsymbol{\omega})\) and, moreover, that \(f\) has a Laurent expansion
\begin{equation}\stepcounter{subsubsection}%
	f(\boldsymbol{\omega}) = \sum_{n \in \mathds{Z}} a_{n}(\boldsymbol{\omega}') t^{n}(\boldsymbol{\omega})
\end{equation}
for sufficiently small values of \(t\), where the coefficients \(a_{n}\) depend only on \(\boldsymbol{\omega}'\).
\subsubsection{} Now \(f\) is called \textbf{modular} if the coefficients \(a_{n}(\boldsymbol{\omega}')\) vanish identically for \(n < 0\), and is
a \textbf{cusp form} if even \(a_{n}(\boldsymbol{\omega}') \equiv 0\) for \(n \geq 0\).

\begin{Remark}
	\begin{enumerate}[wide, label=(\roman*)]
		\item Suppose that \(f\) has weight \(k\) and type \(m\). Then the coefficient functions \(a_{n}\) are in fact weakly modular of
		weight \(k-n\) and type \(m\), see e.g. \cite{BassonBreuerPink24}.
		\item If \(a_{n} \neq 0\) then \(n \equiv k+m \pmod{q-1}\).
		\item The boundary condition \enquote{\(a_{n} \equiv 0\) if \(n<0\)} for a weak modular form \(f\) to be in fact modular is equivalent
		with \enquote{\(f\) is bounded on the canonical fundamental domain \(\mathbf{F}\) of \(\Gamma\)}, see [14], but we will not use this fact.
	\end{enumerate}	
\end{Remark}

\subsection{} Associating the lattice \(\Lambda_{\boldsymbol{\omega}}\) with \(\boldsymbol{\omega} \in \Omega\), we will regard the 
\(\alpha_{i} = \alpha_{i}(\Lambda_{\boldsymbol{\omega}}) = \alpha_{i}(\boldsymbol{\omega})\), and ditto \(\beta_{i}\), \(g_{i}\), \(E_{k}\), as
functions on \(\Omega\). Then \(\alpha_{i}\), \(\beta_{i}\), \(g_{i}\) are actually modular forms of weight \(q^{i} -1\), and \(E_{k}\) is
modular of weight \(k\), all of type zero.

Define \(M_{k,m}\) as the \(C_{\infty}\)-vector space of modular forms of weight \(k\) and type \(m\), and 
\begin{equation}
	\mathbf{M} \defeq \bigoplus_{\substack{k \in \mathds{N}_{0} \\ m \in \mathds{Z}/(q-1)}} M_{k,m}
\end{equation}
as the bigraded \(C_{\infty}\)-algebra of modular forms, with sub-algebra
\begin{equation}
	\mathbf{M}_{0} \defeq \bigoplus_{k \in \mathds{N}_{0}} M_{k,0}
\end{equation}
of forms of type \(0\). Then:

\begin{Theorem}[{\cite{Gekeler17}, \cite{BassonBreuerPink24}}] ~
	\begin{enumerate}[label=\(\mathrm{(\roman*)}\)]
		\item \(\mathbf{M}_{0}\) is the polynomial ring \(C_{\infty}[g_{1}, \dots, g_{r}]\) in the algebraically independent forms \(g_{1}, \dots, g_{r}\);
		\item \(\mathbf{M} = \mathbf{M}_{0}[h]\), where \(h \in M_{(q^{r}-1)/(q-1),1}\) is a \((q-1)\)-th root of the discriminant 
		\(\Delta = g_{r}\).
	\end{enumerate}
\end{Theorem}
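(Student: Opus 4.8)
The plan is to obtain both statements from the interpretation of \(\Gamma\backslash\Omega\) as the analytic coarse moduli space of rank-\(r\) Drinfeld \(A\)-modules. First I would check that \(g_{1},\dots,g_{r}\) really are modular forms of the asserted weight and type: from \(\phi_{T}^{c\Lambda}=c\,\phi_{T}^{\Lambda}\,c^{-1}\) one reads off \(g_{i}(c\Lambda)=c^{1-q^{i}}g_{i}(\Lambda)\), and since \(\Lambda_{\gamma\boldsymbol{\omega}}=\aut(\gamma,\boldsymbol{\omega})^{-1}\Lambda_{\boldsymbol{\omega}}\) this is precisely \eqref{Eq.Weak-modular-form} with \(k=q^{i}-1\) and \(m=0\); holomorphy on \(\Omega\) and the boundary estimate making \(g_{i}\) modular are routine. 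As \(\Delta=g_{r}\) is nowhere zero on \(\Omega\), a fixed \((q-1)\)-st root \(h\) of \(\Delta\) exists as a holomorphic function, and a short computation with the cocycle \(\aut\) shows \(h\in M_{(q^{r}-1)/(q-1),1}\). It then remains to prove (i) that the \(g_{i}\) are algebraically independent and generate \(\mathbf{M}_{0}\), and (ii) that \(\mathbf{M}=\bigoplus_{m=0}^{q-2}h^{m}\mathbf{M}_{0}\).

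For algebraic independence I would invoke Drinfeld's uniformization: every rank-\(r\) Drinfeld \(A\)-module over \(C_{\infty}\) is isomorphic to \(\phi^{\Lambda_{\boldsymbol{\omega}}}\) for some \(\boldsymbol{\omega}\in\Omega\), and two of these are isomorphic exactly when the \(\boldsymbol{\omega}\)'s lie in one \(\Gamma\)-orbit. Hence \(\boldsymbol{\omega}\mapsto(g_{1}(\boldsymbol{\omega}),\dots,g_{r}(\boldsymbol{\omega}))\) maps \(\Omega\) onto \(\{(g_{1},\dots,g_{r}):g_{r}\neq 0\}\), so a nonzero polynomial cannot annihilate the \(g_{i}\). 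For generation I would induct on \(r\) (the rank-\(2\) case being classical). Let \(0\neq f\in M_{k,0}\); expanding \(f=\sum_{n\geq 0}a_{n}t^{n}\), each \(a_{n}\) is a rank-\((r-1)\) modular form of weight \(k-n\) and type \(0\). The reduction of the generic module at the boundary shows that for \(i<r\) the \(t\)-expansion of \(g_{i}\) has constant term the rank-\((r-1)\) form \(g_{i}'\), whereas \(\Delta=g_{r}\) vanishes there to \(t\)-order exactly \(q-1\) with unit leading coefficient \((\Delta')^{q}\). By the inductive hypothesis \(a_{0}=P_{0}(g_{1}',\dots,g_{r-1}')\), so \(F:=f-P_{0}(g_{1},\dots,g_{r-1})\in M_{k,0}\) is a cusp form; since the exponents in the \(t\)-expansion of a type-\(0\) form are multiples of \(q-1\) (a direct check with the torus element \(\mathrm{diag}(\zeta,1,\dots,1)\in\Gamma\), which sends \(t\mapsto\zeta^{-1}t\)), \(F\) has \(t\)-order \(\geq q-1\), whence \(F/\Delta\) is holomorphic on \(\Omega\) and at the boundary, i.e. \(F/\Delta\in M_{k-(q^{r}-1),0}\). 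Induction on the weight — with \(M_{\ell,0}=0\) for \(\ell<0\) and \(M_{0,0}=C_{\infty}\) as the base — places \(F/\Delta\), hence \(f\), in \(C_{\infty}[g_{1},\dots,g_{r}]\). For (ii), given \(f\in M_{k,m}\) with \(0\leq m\leq q-2\) the quotient \(f/h^{m}\) has type \(0\) and is holomorphic on \(\Omega\) (since \(h\) is nowhere zero there); that it is also holomorphic at the boundary follows from the behaviour of \(h\) at the cusp together with the exponent congruence for type-\(m\) forms, so \(f/h^{m}\in\mathbf{M}_{0}\) by (i). Thus \(f\in h^{m}\mathbf{M}_{0}\), and since \(h^{q-1}=g_{r}\in\mathbf{M}_{0}\) we get \(\mathbf{M}=\bigoplus_{m=0}^{q-2}h^{m}\mathbf{M}_{0}=C_{\infty}[g_{1},\dots,g_{r-1},h]\).

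The main obstacle — and the only genuinely hard point — is the global input feeding the weight induction: that \(M_{\ell,0}=0\) for \(\ell<0\) and \(=C_{\infty}\) for \(\ell=0\), and, underlying this, that the Satake (Drinfeld) compactification \(\overline{\Gamma\backslash\Omega}\) is a proper normal variety with relatively ample tautological sheaf, whose boundary is cut out by \(g_{r}=0\) and is itself the rank-\((r-1)\) compactification. Equivalently, one needs the reduction theory of \(\Gamma\) acting on \(\Omega\), i.e. control of the growth of modular forms on the canonical fundamental domain \(\mathbf{F}\). Granting this — the substance of \cite{BassonBreuerPink24}, and for type \(0\) of \cite{Gekeler17} — one in fact gets the clean identification \(\overline{\Gamma\backslash\Omega}\cong\mathds{P}(q-1,q^{2}-1,\dots,q^{r}-1)\), from which both parts of the theorem are immediate; the \(t\)-expansion bookkeeping above, including the boundary behaviour of the \(g_{i}\) and of \(h\), is merely its concrete shadow and is exactly what the remainder of the present paper makes quantitative.
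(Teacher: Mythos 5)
The paper does not prove this theorem: it is quoted from \cite{Gekeler17} and \cite{BassonBreuerPink24}, so there is no in-paper argument to compare with. Your sketch reproduces the standard architecture of those references --- weak modularity of the \(g_{i}\) from \(\phi_{T}^{c\Lambda}=c\,\phi_{T}^{\Lambda}\,c^{-1}\), algebraic independence from surjectivity of \((g_{1},\dots,g_{r})\) onto \(\{\Delta\neq 0\}\) via Drinfeld uniformization, generation by a double induction on rank and weight using the constant term of the \(t\)-expansion and division by \(\Delta\), and the decomposition \(\mathbf{M}=\bigoplus_{0\leq m\leq q-2}h^{m}\mathbf{M}_{0}\) --- and you correctly isolate the hard global inputs (\(M_{\ell,0}=0\) for \(\ell<0\), \(M_{0,0}=C_{\infty}\), properness of the compactification and the attendant reduction theory) as material to be imported from the references. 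To that extent the proposal is an honest reduction of the theorem to the cited sources, which is in effect all the paper itself does.

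One step, however, is asserted where it genuinely needs an argument: \enquote{as \(\Delta\) is nowhere zero on \(\Omega\), a \((q-1)\)-st root \(h\) exists as a holomorphic function.} In rigid geometry a nowhere-vanishing holomorphic function on a connected space need not admit an \(n\)-th root; one has to show that the class of \(\Delta\) in \(\mathcal{O}(\Omega)^{*}/(\mathcal{O}(\Omega)^{*})^{q-1}\) is trivial up to constants. This is exactly why the literature constructs \(h\) explicitly --- in \cite{BassonBreuerPink24} as a discriminant/coefficient form attached to a level structure, and in the form used in the present paper via the product \eqref{Eq.Discriminant-form-g-r-product-expansion}, where the root is exhibited by induction on the rank through \(h=(h')^{q}t\prod_{a}S_{a}(t)^{q^{r}-1}\) --- and only afterwards identifies its type. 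Your abelianization remark shows that the automorphy factor of \emph{any} holomorphic root is some power of \(\det\), but pinning down that this power is \(1\) (i.e.\ that \(h\in M_{(q^{r}-1)/(q-1),1}\)) again requires the explicit construction rather than a \enquote{short computation with the cocycle.} A second, smaller gap of the same kind: to start the rank induction you need that the constant coefficient \(a_{0}\) of a modular form is itself \emph{modular} of rank \(r-1\) (bounded at all boundary strata of \(\Omega^{r-1}\)), not merely weakly modular as in the paper's Remark on the \(a_{n}\); this too is part of the boundary analysis you are deferring. Neither issue breaks the architecture, but both should be proved or explicitly charged to the references rather than treated as immediate.
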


\begin{Remarks}
	\begin{enumerate}[wide, label=(\roman*)]
		\item The various \(M_{k,m}\) are linearly independent as functions on \(\Omega\); hence their sum is in fact direct.
		\item We choose the normalization of \(h\) such that \(h^{q-1} = ({-}1)^{r-1}\Delta\). This allows the formula
		\eqref{Eq.Discriminant-form-g-r-product-expansion} for \(h\).
		\item As results from \eqref{Eq.Second-Coefficients-for-logarithm} and \eqref{Eq.g_k}, we can also write
		\[
			\mathbf{M}_{0} = C_{\infty}[\alpha_{1}, \dots, \alpha_{r}] = C_{\infty}[\beta_{1}, \dots, \beta_{r}] = C_{\infty}[E_{q-1}, \dots, E_{q^{r}-1}].
		\]
	\end{enumerate}	
\end{Remarks}

\subsection{}\label{Subsection.Drinfeld-A-module-of-rank-r}% 
Given \(\boldsymbol{\omega} \in \Omega\), we let \(\phi^{\boldsymbol{\omega}}\) be the Drinfeld \(A\)-module of rank \(r\) associated
with the lattice \(\Lambda_{\boldsymbol{\omega}}\), i.e., \(\phi^{\boldsymbol{\omega}} = \phi^{\Lambda_{\boldsymbol{\omega}}}\). Similarly, writing
\(\boldsymbol{\omega} = (\omega_{1}, \boldsymbol{\omega}')\) as in \ref{Subsection.Lattice-Lambda-omega}, 
\(\phi^{\boldsymbol{\omega}'} = \phi^{\Lambda_{\boldsymbol{\omega}'}}\), of rank \(r'=r-1\). Quite generally, we write objects related to
\(\phi^{\boldsymbol{\omega'}}\), for example functions of type \(\alpha_{i}\), \(\beta_{i}\), \(g_{i}\), \dots with a prime
\(\alpha_{i}'\), \(\beta_{i}'\), \(g_{i}'\), \dots to distinguish them from the \enquote{same} objects related to \(\phi^{\boldsymbol{\omega}}\).

For \(0 \neq a \in A\), of degree \(d\) and leading coefficient \(\sgn(a) \in \mathds{F}^{*}\), the operator polynomial 
\(\phi^{\boldsymbol{\omega}'}_{a}(X)\) (see \eqref{Eq.Drinfeld-module}) of \(\phi^{\boldsymbol{\omega}'}\) has shape
\begin{equation}
	\phi_{a}^{\boldsymbol{\omega}'}(X) = aX + \prescript{}{a}\ell_{1}'(\boldsymbol{\omega}')X^{q} + \prescript{}{a}\ell_{2}'(\boldsymbol{\omega}')X^{q^{2}} + \cdots + \prescript{}{a}\ell_{(r-1)d}'(\boldsymbol{\omega}')X^{q^{(r-1)d}},
\end{equation}
with last coefficient
\begin{equation}\label{Eq.Operator-polynomial-last-coefficient}
	\prescript{}{a}\ell_{(r-1)d}'(\boldsymbol{\omega}') = \sgn(a) \Delta'(\boldsymbol{\omega}')^{e} \eqdef \Delta_{a}'(\boldsymbol{\omega}') \neq 0,
\end{equation}
where the exponent \(e\) is \((q^{(r-1)d}-1)/(q^{r-1}-1)\). Define
\begin{align}\label{Eq.Definition-of-SaX}
	S_{a}(X) 	&\defeq \Delta_{a}'(\boldsymbol{\omega}')^{-1}X^{q^{(r-1)d}} \phi_{a}^{\boldsymbol{\omega}'}(X^{-1}) \\
				&= 1 + \frac{\prescript{}{a}\ell_{(r-1)d-1}'}{\Delta_{a}'} X^{q^{(r-1)d} - q^{(r-1)d - 1}} + \cdots + \frac{a}{\Delta_{a}'} X^{q^{(r-1)d} - 1}. \nonumber
\end{align}
(We omit reference to \(\boldsymbol{\omega}'\) of \(S_{a}(X)\) and of its coefficients \(\frac{\prescript{}{a}\ell_{i}'}{\Delta_{a}'}\).) Let
\(\mathcal{O}' = \mathcal{O}(\Omega')\) be the ring of holomorphic functions on \(\Omega' = \Omega^{r-1}\). As \(\phi' = \phi^{\boldsymbol{\omega}'}\)
varies with \(\boldsymbol{\omega}'\) over \(\Omega'\), \(S_{a}(X) \in \mathcal{O}'[X]\) is a polynomial with coefficients in \(\mathcal{O}'\). If
\(a \in \mathds{F}^{*}\) then \(S_{a}(X) = 1\); if \(a = T+c\) with \(c \in \mathds{F}\) then
\begin{equation}
	S_{a}(X) = 1 + \frac{g_{r-1}'}{\Delta'} X^{q^{r-1}-q^{r-2}} + \cdots + \frac{g_{1}'}{\Delta'}X^{q^{r-1}-q} + \frac{T+c}{\Delta'} X^{q^{r-1}-1}.
\end{equation}
With notation as before, let \(t_{a}\) be the function
\begin{equation}\label{Eq.Definition-of-ta-omega}
	t_{a}(\boldsymbol{\omega}) = t(a\omega_{1}, \boldsymbol{\omega}') = (\Delta_{a}')^{-1} t^{q^{(r-1)d}}/S_{a}(t),
\end{equation}
where the right hand side is regarded as a power series in \(t\) with coefficients in \(\mathcal{O}'\).

By means of \(S_{a}\) and \(t_{a}\), we may describe the \(t\)-expansions of some modular forms, as follows.

\begin{Theorem}[{\cite{Gekeler88}, \cite{Gekeler25}}]\label{Theorem.Eisenstein-series-Eqk-1-t-expansion}
	The special Eisenstein series \(E_{q^{k}-1}\) has the \(t\)-expansion
	\begin{equation}\label{Eq.Eisenstein-series-Eqk-1-t-expansion}
		E_{q^{k}-1}(\boldsymbol{\omega}) = E_{q^{k}-1}'(\boldsymbol{\omega}') - \sum_{a \in A \text{ monic}} G_{q^{k}-1}(t_{a}),
	\end{equation}	
	where \(G_{q^{k}-1}(X)\) is the polynomial
	\begin{equation}\label{Eq.Polynomial-Gqk-1-X}
		G_{q^{k}-1}(X) = \sum_{0 \leq i < k} \beta_{i}' X^{q^{k}-q^{i}}
	\end{equation}
	with \(\beta_{i}' = \beta_{i}'(\boldsymbol{\omega}') = {-}E_{q^{i}-1}'(\boldsymbol{\omega}')\). It converges for \(\lvert t \rvert\) small enough,
	locally uniformly in \(\boldsymbol{\omega}'\).
\end{Theorem}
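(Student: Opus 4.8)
The plan is to pass from the lattice sum defining \(E_{q^{k}-1}\) to a sum over the sublattice \(\Lambda'\), isolate the contribution of each coset \(a\omega_{1}+\Lambda'\), and then evaluate that contribution in closed form by an elementary analytic argument; the \(t\)-expansion of \(t_{a}\) itself is already at hand from \eqref{Eq.Definition-of-ta-omega}.

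First I would write \(\Lambda_{\boldsymbol{\omega}}=A\omega_{1}\oplus\Lambda'\) and split the (non-archimedeanly absolutely convergent) series \(E_{q^{k}-1}(\boldsymbol{\omega})=\sideset{}{^{\prime}}\sum_{\lambda\in\Lambda_{\boldsymbol{\omega}}}\lambda^{-(q^{k}-1)}\) according to the coefficient \(a\) of \(\omega_{1}\). The \(a=0\) part is \(E_{q^{k}-1}'(\boldsymbol{\omega}')\); for \(a\neq0\) we have \(a\omega_{1}\notin\Lambda'\), so the inner sum has no zero term and, after \(\lambda'\mapsto-\lambda'\), equals \(\theta_{q^{k}-1}(a\omega_{1})\), where \(\theta_{n}(z):=\sum_{\lambda'\in\Lambda'}(z-\lambda')^{-n}\). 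Since \((q-1)\mid(q^{k}-1)\) we get \(\theta_{q^{k}-1}(ca\omega_{1})=c^{-(q^{k}-1)}\theta_{q^{k}-1}(a\omega_{1})=\theta_{q^{k}-1}(a\omega_{1})\) for \(c\in\mathds{F}^{*}\) (as \(c^{q^{k}-1}=1\)); grouping the \(q-1\) scalar multiples of each monic \(a\) and using \(q-1=-1\) in \(C_{\infty}\) yields \(E_{q^{k}-1}(\boldsymbol{\omega})=E_{q^{k}-1}'(\boldsymbol{\omega}')-\sum_{a\text{ monic}}\theta_{q^{k}-1}(a\omega_{1})\).

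Next I would prove the pointwise identity \(\theta_{q^{k}-1}(z)=\sum_{0\le i<k}\beta_{i}'\,e^{\Lambda'}(z)^{-(q^{k}-q^{i})}\); evaluated at \(z=a\omega_{1}\), with \(t_{a}=e^{\Lambda'}(a\omega_{1})^{-1}\), this is exactly \(\theta_{q^{k}-1}(a\omega_{1})=G_{q^{k}-1}(t_{a})\), hence the theorem. Because \(\theta_{q^{k}-1}\) is \(\Lambda'\)-periodic it factors through \(s:=e^{\Lambda'}(z)\) as a function \(\Theta(s)\) holomorphic on \(C_{\infty}\smallsetminus\{0\}\) with a pole only at \(s=0\); choosing for each \(s\) a preimage \(z\) of minimal absolute value gives \(\lvert z-\lambda'\rvert\ge\lvert z\rvert\) for all \(\lambda'\), whence \(\lvert\Theta(s)\rvert\le\lvert z\rvert^{-(q^{k}-1)}\to0\) as \(\lvert s\rvert\to\infty\) (the minimal preimage has \(\lvert z\rvert\to\infty\) since \(e^{\Lambda'}\) is bounded on bounded sets). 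For the principal part at \(s=0\) I would expand about the preimage \(z_{0}(s)=\log^{\Lambda'}(s)=s+\sum_{i\ge1}\beta_{i}'s^{q^{i}}\): the terms \(\lambda'\neq0\) contribute a power series in \(s\), while \(z_{0}(s)^{-(q^{k}-1)}=s^{-(q^{k}-1)}(1+u)^{-(q^{k}-1)}\) with \(u=\sum_{i\ge1}\beta_{i}'s^{q^{i}-1}\), and in characteristic \(p\) one has \((1+u)^{-(q^{k}-1)}=(1+u)(1+u^{q^{k}})^{-1}\). Every term arising from \(u^{mq^{k}}\) and \(u^{mq^{k}+1}\) with \(m\ge1\) has \(s\)-order \(\ge q^{k}(q-2)+1>0\), so the negative powers of \(s\) come only from \(s^{-(q^{k}-1)}(1+u)\) and equal precisely \(\sum_{0\le i<k}\beta_{i}'s^{-(q^{k}-q^{i})}\). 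Thus \(\Theta(s)-\sum_{0\le i<k}\beta_{i}'s^{-(q^{k}-q^{i})}\) is entire and vanishes at \(\infty\), hence is \(0\) by the non-archimedean Liouville theorem (a bounded entire function on \(C_{\infty}\) is constant), proving the identity.

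Finally, for convergence: \eqref{Eq.Definition-of-ta-omega} shows \(\ord_{t}(t_{a})=q^{(r-1)d}\) for \(\deg a=d\) (since \(S_{a}(0)=1\)), so \(\ord_{t}G_{q^{k}-1}(t_{a})\ge q^{k-1}(q-1)q^{(r-1)d}\to\infty\); combined with local uniform bounds over compacta of \(\Omega'\) on \(\Delta_{a}'\), on the coefficients of \(S_{a}\), and on the \(\beta_{i}'\), this gives convergence for \(\lvert t\rvert\) small, locally uniformly in \(\boldsymbol{\omega}'\). The hard part will be the second step — keeping the bookkeeping of \(s\)-orders in characteristic \(p\) honest, so that genuinely no spurious negative power of \(s\) survives and the principal part is exactly as claimed, together with the analytic facts about \(\Theta\) (holomorphy off \(0\), decay at \(\infty\)); the uniformity of the convergence estimates in \(\boldsymbol{\omega}'\) is routine but also needs a little care because \(S_{a}\) has large degree.
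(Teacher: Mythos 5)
The paper does not actually prove this theorem: it is imported from \cite{Gekeler88} and \cite{Gekeler25}, so there is no internal proof to compare against. Your argument is a correct reconstruction of the standard one. The coset decomposition \(\Lambda_{\boldsymbol{\omega}} = A\omega_{1}\oplus\Lambda'\), the \(\mathds{F}^{*}\)-invariance of \(\theta_{q^{k}-1}\) under \(a\mapsto ca\), and \(\sum_{c\in\mathds{F}^{*}}1 = q-1 = -1\) correctly produce the outer sum \(-\sum_{a\text{ monic}}\theta_{q^{k}-1}(a\omega_{1})\), and the only genuinely non-routine point is the identification \(\theta_{q^{k}-1}(z) = G_{q^{k}-1}(t(z))\) with \(G_{q^{k}-1}\) in the special closed form \eqref{Eq.Polynomial-Gqk-1-X}. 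Your derivation of that identity is sound: writing \(z=\log^{\Lambda'}(s)=s(1+u)\) with \(u=\sum_{i\geq1}\beta_{i}'s^{q^{i}-1}\) of order \(q-1\), the characteristic-\(p\) identity \((1+u)^{1-q^{k}}=(1+u)(1+u^{q^{k}})^{-1}\) shows the correction terms have \(s\)-order at least \(q^{k}(q-2)+1\geq 1\) after multiplication by \(s^{-(q^{k}-1)}\), so the principal part is exactly \(\sum_{0\leq i<k}\beta_{i}'s^{-(q^{k}-q^{i})}\), and the non-archimedean Liouville argument (with the minimal-preimage bound \(\lvert\theta(z)\rvert\leq\lvert z\rvert^{-(q^{k}-1)}\) giving decay at infinity) finishes it. The two points you flag yourself — that the \(\Lambda'\)-periodic function \(\theta_{q^{k}-1}\) descends to a rigid-analytic function of \(s\) along the \'etale surjection \(e^{\Lambda'}\), and the locally uniform convergence estimates in \(\boldsymbol{\omega}'\) — are exactly the places where the cited references spend their technical effort, but nothing in your outline fails there; compared with deriving \eqref{Eq.Polynomial-Gqk-1-X} from the Goss-polynomial recursion or generating function, your principal-part computation is arguably cleaner.
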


\begin{Theorem}[{\cite{Gekeler85}, \cite{Basson17}, \cite{Gekeler25}}]
	The discriminant form \(\Delta = g_{r}\) has a product expansion
	\begin{equation}
		\Delta(\boldsymbol{\omega}) = {-}\Delta'(\boldsymbol{\omega}')^{q}t^{q-1} \prod_{a \in A \text{ monic}} S_{a}(t)^{(q^{r}-1)(q-1)}.
	\end{equation}	
	Accordingly, the function \(h\) (with the condition \(h^{q-1} = ({-}1)^{r-1}\Delta\)) may be chosen such that
	\begin{equation}\label{Eq.Discriminant-form-g-r-product-expansion}
		h(\boldsymbol{\omega}) = h'(\boldsymbol{\omega}')^{q}t \prod_{a \text{ monic}} S_{a}(t)^{q^{r}-1}.
	\end{equation} 
\end{Theorem}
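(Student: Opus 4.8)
\emph{Plan of proof.} I would derive the product expansion of \(\Delta\) from the presentation of \(\phi^{\boldsymbol{\omega}}\) as a Tate-type quotient of its rank-\((r-1)\) companion \(\phi' = \phi^{\boldsymbol{\omega}'}\), and then descend to \(h\) by extracting a \((q-1)\)-th root. Since \(\Lambda_{\boldsymbol{\omega}} = A\omega_{1}\oplus\Lambda'\) with \(\Lambda' = \Lambda_{\boldsymbol{\omega}'}\), the set
\[
	M \defeq e^{\Lambda'}(A\omega_{1}) = \{\phi'_{a}(t^{-1}) \mid a\in A\}
\]
is (for \(\lvert t\rvert\) small) a discrete, free rank-one \(\phi'(A)\)-submodule of \(C_{\infty}\), generated by \(t^{-1} = e^{\Lambda'}(\omega_{1})\). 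Its exponential \(\psi(z) \defeq z\sideset{}{^{\prime}}\prod_{m\in M}(1 - z/m)\) is an entire \(\mathds{F}\)-linear function with linear coefficient \(1\), and one has
\[
	e^{\Lambda_{\boldsymbol{\omega}}} = \psi\circ e^{\Lambda'}, \qquad \psi\circ\phi'_{a} = \phi_{a}^{\boldsymbol{\omega}}\circ\psi \quad (a\in A),
\]
the first identity because both sides are \(\mathds{F}\)-linear entire functions with the same divisor \(\Lambda_{\boldsymbol{\omega}}\) and the same linear coefficient \(1\) (note \((e^{\Lambda'})^{-1}(M) = A\omega_{1} + \Lambda' = \Lambda_{\boldsymbol{\omega}}\)), the second then following from the functional equations of \(e^{\Lambda_{\boldsymbol{\omega}}}\) and \(e^{\Lambda'}\).

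Next I would make \(\psi\) explicit and read off \(\Delta\). For \(\lvert t\rvert\) small, \(\phi'_{a}(t^{-1}) = \Delta_{a}'\,t^{-q^{(r-1)\deg a}}S_{a}(t) \ne 0\) for every \(a\ne 0\) by \eqref{Eq.Definition-of-SaX}, so \(M\smallsetminus\{0\}\) is the disjoint union of the \(\mathds{F}^{*}\)-orbits \(\mathds{F}^{*}\cdot\phi'_{a}(t^{-1})\) indexed by the monic \(a\in A\); with the identity \(\prod_{c\in\mathds{F}^{*}}(1 - cx) = 1 - x^{q-1}\) and \(t_{a} = (\phi'_{a}(t^{-1}))^{-1}\) (see \eqref{Eq.Definition-of-ta-omega}) this yields \(\psi(z) = z\prod_{a\text{ monic}}(1 - (z t_{a})^{q-1})\). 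On the other hand, factoring \(\phi_{T}^{\boldsymbol{\omega}}\) over its kernel as \(\phi_{T}^{\boldsymbol{\omega}}(X) = \Delta(\boldsymbol{\omega})\prod_{\mu\in T^{-1}\Lambda_{\boldsymbol{\omega}}/\Lambda_{\boldsymbol{\omega}}}(X - e^{\Lambda_{\boldsymbol{\omega}}}(\mu))\) and comparing linear coefficients gives
\[
	\Delta(\boldsymbol{\omega}) = T\Bigl(\,\sideset{}{^{\prime}}\prod_{\mu\in T^{-1}\Lambda_{\boldsymbol{\omega}}/\Lambda_{\boldsymbol{\omega}}}e^{\Lambda_{\boldsymbol{\omega}}}(\mu)\Bigr)^{-1},
\]
and, in exactly the same way, \(\Delta'(\boldsymbol{\omega}') = T/\sideset{}{^{\prime}}\prod_{\mu'}e^{\Lambda'}(\mu')\) for \(\phi'\).

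Now I would combine the two. Writing \(T^{-1}\Lambda_{\boldsymbol{\omega}}/\Lambda_{\boldsymbol{\omega}} = \mathds{F}\,\overline{\omega_{1}/T}\,\oplus\,T^{-1}\Lambda'/\Lambda'\) (with \(\phi'_{T}(e^{\Lambda'}(\omega_{1}/T)) = t^{-1}\)), substituting \(e^{\Lambda_{\boldsymbol{\omega}}}(\mu) = \psi(e^{\Lambda'}(\mu))\) and the product form of \(\psi\), and then using that \(a\mapsto\phi'_{a}\) is an \(\mathds{F}\)-algebra homomorphism (whence \(\phi'_{u}(t^{-1}) - \phi'_{v}(t^{-1}) = \phi'_{u-v}(t^{-1})\) and \(c\,\phi'_{a}(t^{-1}) = \phi'_{ca}(t^{-1})\) for \(u,v\in A\), \(c\in\mathds{F}^{*}\)), the factorization \(\phi'_{T}(Y) - bt^{-1} = \Delta'\prod_{\phi'_{T}(y)=bt^{-1}}(Y-y)\) for \(b\in\mathds{F}^{*}\), and \(t_{a} = (\Delta_{a}')^{-1}t^{q^{(r-1)\deg a}}S_{a}(t)^{-1}\), one rewrites every factor of the product in the denominator through the polynomials \(S_{a}(t)\). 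After cancellation of the powers of \(t\) and of \(\Delta'\), reindexing the resulting multiple product over the monic polynomials of \(A\) collapses the whole expression to \({-}\Delta'(\boldsymbol{\omega}')^{q}\,t^{q-1}\prod_{a\text{ monic}}S_{a}(t)^{(q^{r}-1)(q-1)}\), as claimed. Convergence of this product for \(\lvert t\rvert\) small, locally uniformly in \(\boldsymbol{\omega}'\), is routine: by \eqref{Eq.Definition-of-SaX}, \(S_{a}(t) = 1 + O\bigl(t^{q^{(r-1)\deg a}-q^{(r-1)\deg a - 1}}\bigr)\) with coefficients holomorphic in \(\boldsymbol{\omega}'\), so \(\lvert S_{a}(t)-1\rvert\) decays doubly exponentially in \(\deg a\); this estimate also justifies the rearrangements above. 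Finally, for the statement on \(h\), I would use \(h^{q-1} = ({-}1)^{r-1}\Delta\) and \((h')^{q-1} = ({-}1)^{r-2}\Delta'\) to check directly that \(\bigl(h'(\boldsymbol{\omega}')^{q}\,t\prod_{a\text{ monic}}S_{a}(t)^{q^{r}-1}\bigr)^{q-1} = ({-}1)^{r-1}\Delta(\boldsymbol{\omega})\); since any two \((q-1)\)-th roots of the same holomorphic function differ by a constant in \(\mathds{F}^{*}\), the indicated product is a legitimate choice of \(h\).

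I expect the combinatorial reorganization in the third step to be the main obstacle: turning the division-point expression for \(\Delta\) into the Euler product over monic polynomials requires keeping careful track of the double product over the \(T\)-torsion points of \(\phi'\) and over the monic \(a\in A\), and a precise count of the multiplicity with which each \(S_{a}(t)\) occurs — this is what produces the exponent \((q^{r}-1)(q-1)\) and the global sign \({-}1\).
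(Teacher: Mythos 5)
This theorem is stated in the paper with a citation to \cite{Gekeler85}, \cite{Basson17} and \cite{Gekeler25} and is given no proof there, so the only meaningful comparison is with those sources — and your plan reproduces their argument essentially verbatim. The skeleton is sound: your \(M=\{\phi'_{a}(t^{-1})\mid a\in A\}\) is the same Tate lattice that the paper itself introduces as \(L=\langle t^{-1}\rangle\) in Section 3 to derive \eqref{Eq.Recursion-formula-gk}; the factorization \(e^{\Lambda_{\boldsymbol{\omega}}}=e^{M}\circ e^{\Lambda'}\), the Euler product for \(e^{M}\) over the \(\mathds{F}^{*}\)-orbits of \(M\smallsetminus\{0\}\), and the division-point formula \(\Delta=T\bigl(\sideset{}{^{\prime}}\prod_{\mu}e^{\Lambda_{\boldsymbol{\omega}}}(\mu)\bigr)^{-1}\) are all correct (the signs are harmless because \(({-}1)^{q^{r}-1}=1\) in characteristic \(p\)), and the descent to \(h\) at the end is legitimate since a \((q-1)\)-th root of unity lies in \(\mathds{F}^{*}\) and \(\Omega\) is connected, so the product differs from a valid \(h\) by a constant in \(\mathds{F}^{*}\), which is again a valid \(h\). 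Two consistency checks support your claimed answer: the leading term \({-}\Delta'^{\,q}t^{q-1}\) agrees with Theorem \ref{Theorem.gk-as-power-series-in-t} for \(k=r\) (where \(g_{r}'=0\) and \(g_{r-1}'=\Delta'\)), and the exponent \((q^{r}-1)(q-1)\) specializes correctly to the classical rank-\(2\) formula. What keeps your text from being a proof is precisely the step you yourself flag as the main obstacle: the collapse of the double product over \(T^{-1}\Lambda_{\boldsymbol{\omega}}/\Lambda_{\boldsymbol{\omega}}\) and over monic \(a\) into the Euler product with multiplicity \((q^{r}-1)(q-1)\) and global sign \({-}1\) is asserted, not carried out, and that computation (tracking how each \(S_{a}\) reappears through the identities \(\phi'_{T}(t_{a}^{-1})=t_{Ta}^{-1}\) and \(\prod_{\mu'}(X-e^{\Lambda'}(\mu'))=\phi'_{T}(X)/\Delta'\)) is where essentially all of the content of \cite{Basson17} lives. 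As a plan it is the right plan; as a proof it defers the theorem's decisive identity.
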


We will use these to determine the first few coefficients \(a_{n}\) of 
\[
	f(\boldsymbol{\omega}) = \sum_{n \geq 0} a_{n}(\boldsymbol{\omega}') t^{n}(\boldsymbol{\omega})
\]
for \(f \in \{g_{1}, \dots, g_{r}, h, E_{q-1}, E_{q^{2}-1}, \dots \}\).

\section{The special Eisenstein series \(E_{q^{k}-1}\)}

\subsection{} The \textbf{support} \(\supp(P)\) of a polynomial or power series \(P(X)\) is the set of indices \(n \in \mathds{N}_{0}\) such that
the \(n\)-th coefficient of \(P\) is non-zero. The vanishing order or briefly \textbf{order} of \(P\) is \(o(P) \defeq \min \supp(P)\). We write
\(P(X) = o(X^{n})\) if \(o(P) \geq n\). For example,
\begin{equation}
	o(G_{q^{k}-1}) = q^{k}-q^{k-1}
\end{equation}
(as long as \(\beta'_{k-1} \neq 0\), see \eqref{Eq.Polynomial-Gqk-1-X}). 

An \textbf{affine-linear} map of \(\mathds{F}\)-vector spaces is the sum of a linear and a constant map. The crucial property for our purposes is
the easy observation: Let \(f \colon V \to W\) be an affine-linear map of \(\mathds{F}\)-vector spaces with \(0 < \dim V < \infty\). Then
\begin{equation}\label{Eq.Affine-linear-map-on-finite-dimensional-IF-space}
	\sum_{v \in V} f(v) = 0
\end{equation}
except possibly in the case where \(q = 2\) and \(\dim V = 1\).

\subsection{} As in \ref{Subsection.Drinfeld-A-module-of-rank-r}, we let \(a \in A\) be a monic (i.e., \(\sgn(a) = 1\)) element of degree \(d \geq 1\).
Then
\begin{align}
	o(S_{a}(X) -1)	&= q^{(r-1)d} - q^{(r-1)d -1} \label{Eq.SaX-for-monic-a}
	\intertext{and}
	o(t_{a})		&= q^{(r-1)d},	
\end{align}
\(t_{a}\) being considered as a power series in \(t\). (We have used the fact that neither of the coefficient functions 
\(\prescript{}{a}\ell_{i}'(\boldsymbol{\omega}')\) vanishes identically, see e.g. \cite{Gekeler22}.)

Write
\begin{equation}\label{Eq.a-linear-combination}
	a = \sum_{0 \leq i \leq d} c_{i}T^{i} \quad \text{with} \quad c_{i} \in \mathds{F}, c_{d}= 1.
\end{equation}
As
\[
	\phi_{a}^{\boldsymbol{\omega}'} = \sum c_{i} \phi_{T^{i}}^{\boldsymbol{\omega}'} \quad \text{and}\quad \phi_{T^{i}}^{\boldsymbol{\omega}'} = \phi_{T}^{\boldsymbol{\omega}'} \circ \phi_{T}^{\boldsymbol{\omega}'} \circ \cdots \circ \phi_{T}^{\boldsymbol{\omega}'}
\]
(\(i\) factors, successively inserted, \(\phi_{1}^{\boldsymbol{\omega}'}(X) = X\)), the coefficients of \(\phi_{a}^{\boldsymbol{\omega}'}(X)\) are
affine-linear functions of \((c_{0}, \dots, c_{d-1})\), and the same holds for the coefficients of the reciprocal polynomial \(S_{a}(X)\)
(see \eqref{Eq.Definition-of-SaX}). Here we use that by \eqref{Eq.Operator-polynomial-last-coefficient},
\begin{equation}\label{Eq.Coefficients-for-Drinfeld-modular-form-phi-a-omega-prime}
	\Delta_{(d)}' \defeq \Delta_{a}' = {\Delta'}^{(q^{(r-1)d}-1)/(q^{r-1}-1))}
\end{equation}
depends only on the degree \(d\) of \(a\).

\subsection{} To get control of the Eisenstein series \(E_{q^{k}-1}\) via Theorem \ref{Theorem.Eisenstein-series-Eqk-1-t-expansion}, we must evaluate
the expression
\[
	\sum_{a \in A \text{ monic}} t_{a}^{q^{k}-q^{i}} = \Big( \sum_{a} t_{a}^{q^{k-i}-1} \Big)^{q^{i}}.
\]
Therefore we define 
\begin{align}\label{Eq.Definition-Theta-j-d}
	\Theta(j,d) \defeq \sum_{\substack{a \in A \text{ monic} \\ \deg a = d}} t_{a}^{q^{j}-1} &&(j,d \geq 1),	
\end{align}
a power series in \(t\) with coefficients in \(\mathcal{O}'\). Our aim is to get good lower estimates for the order \(o(\Theta(j,d))\). By 
the definition \eqref{Eq.Definition-of-ta-omega} of \(t_{a}\) and \eqref{Eq.Coefficients-for-Drinfeld-modular-form-phi-a-omega-prime},
\begin{equation}\label{Eq.Estimate-for-Theta-j-d}
	o(\Theta(j,d)) = q^{(r-1)d}(q^{j}-1) + o(\Sigma(j,d)),
\end{equation}
where
\begin{equation}
	\Sigma(j,d) \defeq \sum_{\substack{a \text{ monic} \\ \deg a = d}} S_{a}^{1-q^{j}} \in \mathcal{O}'[[t]].
\end{equation}
\subsection{} We will evaluate \(\Sigma(j,d)\) via the scheme
\begin{equation}
	\Sigma(j,d) = \sum_{a} S_{a}^{1-q^{j}} = \sum_{a} \frac{S_{a}}{S_{a}^{q^{j}}} = \sum_{a} S_{a}(1 - (S_{a}^{q^{j}}-1) + (S_{a}^{q^{j}}-1)^{2} - \cdots ).
\end{equation}
As by \eqref{Eq.SaX-for-monic-a}
\[
	S_{a}^{q^{j}}(t) = 1 + o(t^{q^{(r-1)d+j} - q^{(r-1)d+j-1}}),
\]
the supports of \(S_{a}(t)\), \(S_{a}^{q^{j}}(t)-1\), and \((S_{a}^{q^{j}}-1)^{2}\) are mutually disjoint. Therefore
\begin{equation}\label{Eq.Support-of-Sa-1-qj}
	\supp(S_{a}^{1-q^{j}}) = \supp(S_{a}) \cupdot \supp(S_{a}^{q^{j}}-1) \cupdot R
\end{equation}
with some subset \(R \subset \mathds{N}\) and \(\min(R) = 2(q^{(r-1)d+j}-q^{(r-1)d+j-1})\). Again, this description depends only on the degree of
\(a\), but not on \(a\) itself. For \(n \in \mathds{N}_{0}\) let \(\kappa_{n,j}(a)\) be the \(n\)-coefficient of \(S_{a}^{1-q^{j}}\). It follows
from \eqref{Eq.a-linear-combination} and \eqref{Eq.Support-of-Sa-1-qj} that the map
\begin{equation}\label{Eq.Affine-linear-map-kappa-n-j}
	(c_{0}, \dots, c_{d-1}) \longmapsto \kappa_{n,j}\Big( \sum_{0 \leq i < d} c_{i}T^{i} + T^{d} \Big)
\end{equation}
is in fact affine-linear if \(n \in \supp(S_{a}) \cup \supp(S_{a}^{q^{j}} - 1)\). Looking at 
\eqref{Eq.Affine-linear-map-on-finite-dimensional-IF-space}, we get our first result.

\begin{Proposition}\label{Proposition.Vanishing-of-kappa-n-j}
	Suppose that \(q>2\) or \(d>1\). Then \(\sum_{a \text{ monic}, \deg a = d} \kappa_{n,j}(a)\) vanishes for \(n < 2(q^{(r-1)d+j}-q^{(r-1)d+j-1})\).
	Hence \(o(\Sigma(j,d)) \geq 2(q^{(r-1)d+j}-q^{(r-1)d+j-1})\). If \(q=2\) and \(d=1\),
	\[
		\Sigma(j,1) = S_{T}^{1-q^{j}} + S_{T+1}^{1-q^{j}} = {\Delta'}^{{-}1} t^{2^{r-1}-1} + {\Delta'}^{{-}2^{j}} t^{2^{r+j-1} - 2^{j}} + o(t^{2^{r+j-1}}),
	\]
	where \(\Delta' \in \mathcal{O}'\) is the discriminant form of rank \(r-1\).
\end{Proposition}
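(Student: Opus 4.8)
The plan is to dispose of the two assertions in turn: the first by the affine‑linearity machinery already assembled, the second by a direct power‑series computation (forced on us, since the exceptional clause of \eqref{Eq.Affine-linear-map-on-finite-dimensional-IF-space} is exactly the case \(q=2\), \(d=1\)).

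\emph{The case \(q>2\) or \(d>1\).} Write \(M := q^{(r-1)d+j}-q^{(r-1)d+j-1}\), so that \(\min(R)=2M\) in \eqref{Eq.Support-of-Sa-1-qj}. I would fix \(n<2M\) and distinguish two cases. If \(n\notin\supp(S_a)\cup\supp(S_a^{q^j}-1)\) — a condition which, by \eqref{Eq.Support-of-Sa-1-qj} and the remark that the decomposition there depends only on the degree, is independent of the particular monic \(a\) of degree \(d\) — then, since \(n<\min(R)\), we get \(n\notin\supp(S_a^{1-q^j})\), i.e. \(\kappa_{n,j}(a)=0\) for every such \(a\), and \(\sum_{a}\kappa_{n,j}(a)=0\) trivially. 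Otherwise \(n\in\supp(S_a)\cup\supp(S_a^{q^j}-1)\), and \eqref{Eq.Affine-linear-map-kappa-n-j} tells us that the map \((c_0,\dots,c_{d-1})\mapsto\kappa_{n,j}(c_0+c_1T+\dots+c_{d-1}T^{d-1}+T^d)\) is affine‑linear on \(V:=\mathds{F}^{d}\). As \(\dim V=d\geq1\) and we have excluded the pair \((q,\dim V)=(2,1)\), the exceptional case of \eqref{Eq.Affine-linear-map-on-finite-dimensional-IF-space} does not occur, so \(\sum_{v\in V}\kappa_{n,j}(v)=\sum_{a\text{ monic},\,\deg a=d}\kappa_{n,j}(a)=0\). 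In either case the \(n\)-th coefficient of \(\Sigma(j,d)=\sum_a S_a^{1-q^j}\) is \(\sum_a\kappa_{n,j}(a)=0\); since this holds for all \(n<2M\), we conclude \(o(\Sigma(j,d))\geq2M\), as claimed.

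\emph{The case \(q=2\), \(d=1\).} Now there are exactly the two monic polynomials \(T\) and \(T+1\) of degree \(1\). Since \(\phi_{T+1}^{\boldsymbol{\omega}'}=\phi_{T}^{\boldsymbol{\omega}'}+\phi_1^{\boldsymbol{\omega}'}\) with \(\phi_1^{\boldsymbol{\omega}'}(X)=X\) (the coefficients of \(\phi_a^{\boldsymbol{\omega}'}\) being affine‑linear in \(c_0\), as recalled after \eqref{Eq.a-linear-combination}), the operator polynomials differ only in their linear coefficient, so \(S_T(t)\) and \(S_{T+1}(t)\) differ only in the coefficient of \(t^{2^{r-1}-1}\), equal to \(T/\Delta'\) resp. \((T+1)/\Delta'\). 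I would write \(S_a=1+A+\ell_a t^{2^{r-1}-1}\), where \(A\in\mathcal{O}'[t]\) is the common interior part (of order \(2^{r-1}-2^{r-2}\), independent of \(a\in\{T,T+1\}\)) and \(\ell_T=T/\Delta'\), \(\ell_{T+1}=\ell_T+1/\Delta'\). In characteristic \(2\) one has \(S_a^{q^j}=1+A^{q^j}+\ell_a^{q^j}t^{(2^{r-1}-1)2^j}\) and \(S_a^{1-q^j}=S_a\bigl(1+(S_a^{q^j}-1)+(S_a^{q^j}-1)^2+\cdots\bigr)\). Expanding both \(S_T^{1-q^j}\) and \(S_{T+1}^{1-q^j}\) and adding, every coefficient not involving \(\ell_a\) cancels (it occurs with multiplicity \(2=0\)), and the two lowest‑order survivors are: the coefficient of \(t^{2^{r-1}-1}\) coming from \(\sum_a S_a\), namely \((\ell_T+\ell_{T+1})\,t^{2^{r-1}-1}=\Delta'^{-1}t^{2^{r-1}-1}\); and the coefficient of \(t^{(2^{r-1}-1)2^j}\) coming from \(\sum_a(S_a^{q^j}-1)\), namely \((\ell_T^{q^j}+\ell_{T+1}^{q^j})\,t^{2^{r+j-1}-2^j}=(\ell_T+\ell_{T+1})^{q^j}t^{2^{r+j-1}-2^j}=\Delta'^{-2^j}t^{2^{r+j-1}-2^j}\). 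Collecting the remaining contributions into the error term gives the stated expansion.

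\textbf{The main obstacle.} Granting the inputs \eqref{Eq.Support-of-Sa-1-qj}, \eqref{Eq.Affine-linear-map-kappa-n-j} and \eqref{Eq.Affine-linear-map-on-finite-dimensional-IF-space}, the first assertion is essentially formal, and the real work is in the \(q=2\), \(d=1\) case. The hard part will be the order bookkeeping: in the expansion of \(S_a^{1-q^j}=S_a\sum_{k\geq0}(S_a^{q^j}-1)^k\) one must not only identify which monomials produce the two leading coefficients, but also control the order of every ``mixed'' contribution \(S_a\cdot(S_a^{q^j}-1)^k\) with \(k\geq1\) — these do not cancel automatically after summing over \(a\in\{T,T+1\}\), because on the two‑element set \(\mathds{F}_2\) every function is affine‑linear for trivial reasons — so as to pin down exactly where the string of vanishing coefficients ends. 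The same semilinearity bookkeeping, resting on the fact that a \(q^j\)-th power of an affine‑linear function is again affine‑linear on \(\mathds{F}_q\)-points (because \(c^{q^j}=c\) for \(c\in\mathds{F}_q\)), is what makes \eqref{Eq.Affine-linear-map-kappa-n-j} — hence the first part — go through, and it is there that care is needed.
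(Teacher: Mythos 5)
Your handling of the first assertion is correct and is precisely the paper's argument (the paper's proof of that half is the one\nobreakdash-liner citing \eqref{Eq.Affine-linear-map-kappa-n-j}, \eqref{Eq.Support-of-Sa-1-qj} and \eqref{Eq.Affine-linear-map-on-finite-dimensional-IF-space}): for \(n<2(q^{(r-1)d+j}-q^{(r-1)d+j-1})\) either \(n\) lies outside the common potential support and every \(\kappa_{n,j}(a)\) vanishes, or \(\kappa_{n,j}\) is affine-linear on \(V=\mathds{F}^{d}\) with \(d\geq 1\) and \((q,\dim V)\neq(2,1)\), so the sum over \(V\) vanishes. Nothing to add there.

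The gap is in the case \(q=2\), \(d=1\), at exactly the point you flag as ``the main obstacle'' and then do not carry out. You assert that the two displayed monomials are ``the two lowest-order survivors'' and that all remaining contributions go into \(o(t^{2^{r+j-1}})\), but the \(k=1\) mixed term \(S_{a}\cdot(S_{a}^{2^{j}}-1)\) produces a survivor below that threshold: with your notation \(S_{a}=1+A+\ell_{a}t^{2^{r-1}-1}\), the cross term \(\ell_{a}t^{2^{r-1}-1}A^{2^{j}}\) sums over \(a\in\{T,T+1\}\) to \({\Delta'}^{-1}t^{2^{r-1}-1}A^{2^{j}}\), whose leading monomial is \((g_{r-2}'/\Delta')^{2^{j}}{\Delta'}^{-1}t^{2^{r-1}-1+2^{r+j-2}}\) with \(g_{r-2}'\not\equiv 0\). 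Since \(2^{r-1}-1<2^{r+j-2}\) for every \(j\geq 1\), this exponent is always \(<2^{r+j-1}\), and for \(j\geq 2\) it is even smaller than \(2^{r+j-1}-2^{j}\), so it precedes your second displayed term. Concretely, for \(r=3\), \(j=1\) one finds \(\Sigma(1,1)={\Delta'}^{-1}t^{3}+{\Delta'}^{-2}t^{6}+{g_{1}'}^{2}{\Delta'}^{-3}t^{7}+o(t^{8})\), with a nonzero coefficient at \(t^{7}<t^{8}=t^{2^{r+j-1}}\). So the bookkeeping you defer would in fact show that the displayed expansion cannot hold with the stated error exponent; to be fair, the paper's own proof of this half is only ``from inspection'' and appears to overlook the same terms. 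What your computation of the \(k=0\) contribution does establish correctly --- and what is all that is actually used downstream, in Corollary \ref{Corollary.Better-estimate-for-Theta-j-d} and the \(q=2\) branches of Theorem \ref{Theorem.Expansion-for-Eqk-1} and Proposition \ref{Proposition.E-qi-1-in-R} --- is the leading behaviour \(o(\Sigma(j,1))=2^{r-1}-1\) with leading coefficient \({\Delta'}^{-1}\). You should either prove only that weaker statement, or complete the order bookkeeping for the \(k=1\) terms and correct the error exponent accordingly.
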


\begin{proof}
	The first part follows from \eqref{Eq.Affine-linear-map-kappa-n-j}, \eqref{Eq.Support-of-Sa-1-qj} and 
	\eqref{Eq.Affine-linear-map-on-finite-dimensional-IF-space}, the second from inspection.
\end{proof}

\begin{Remark}
	It is easy to check that the various \(\supp(( S_{a}^{q^{j}}-1)^{\ell})\) are mutually disjoint as long as \(\ell \leq q\), and so
	\[
		\supp( S_{a}^{1-q^{j}}) = \supp(S_{a}) \bigcupdot_{1 \leq \ell \leq q} \supp( (S_{a}^{q^{j}-1})^{\ell}) \cupdot R'
	\]
	with some remainder \(R' \subset \mathds{N}\). For \(n \in \supp( (S_{a}^{q^{j}}-1)^{\ell})\) the function \((c_{0}, \dots, c_{d-1}) \mapsto \kappa_{n,j}( \sum c_{i}T^{i} + T^{d})\) of \eqref{Eq.Affine-linear-map-kappa-n-j} is \(m\)-polynomial for some \(m\) with
	\(0 \leq m \leq \ell\). Here, \(f\) \(m\)-polynomial means \(f(v) = \sum_{0 \leq i \leq m} f_{i}(v)\), where \(f_{i}(cv) = c^{i}f(v)\)
	for \(c \in \mathds{F}\).
	
	The vanishing of \(\sum_{v \in V} f(v)\) can be shown under assumptions on the size of \(q\). E.g., \(\sum_{v \in V} f_{2}(v) = 0\) for a
	\(2\)-homogeneous map is assured if there exists \(c \in \mathds{F}^{*}\) such that \(c^{2} \neq 1\), i.e., \(q>3\). Hence, making assumptions
	on the size of \(q\), the bound in Proposition \ref{Proposition.Vanishing-of-kappa-n-j} may be sharpened.
\end{Remark}

\begin{Corollary}\label{Corollary.Better-estimate-for-Theta-j-d}
	The quantity \(\Theta(j,d)\) of \eqref{Eq.Definition-Theta-j-d} satisifes \(\Theta(j,d) = o(t^{N})\) with \(N = 3q^{(r-1)d+j} - 2q^{(r-1)d+j-1} - q^{(r-1)d}\) if \((q,d) \neq (2,1)\) and \(N = 2^{r+j-1}-1\) if \((q,d) = (2,1)\).	
\end{Corollary}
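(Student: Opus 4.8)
The plan is to combine the order identity \eqref{Eq.Estimate-for-Theta-j-d} with the order bound on $\Sigma(j,d)$ coming from Proposition \ref{Proposition.Vanishing-of-kappa-n-j}. Indeed, \eqref{Eq.Estimate-for-Theta-j-d} reads $o(\Theta(j,d)) = q^{(r-1)d}(q^{j}-1) + o(\Sigma(j,d))$, so everything reduces to reading off $o(\Sigma(j,d))$ and then adding.

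For the generic case $(q,d) \neq (2,1)$, Proposition \ref{Proposition.Vanishing-of-kappa-n-j} gives directly $o(\Sigma(j,d)) \geq 2(q^{(r-1)d+j} - q^{(r-1)d+j-1})$. Substituting into \eqref{Eq.Estimate-for-Theta-j-d} and simplifying,
\[
	o(\Theta(j,d)) \geq q^{(r-1)d}(q^{j}-1) + 2q^{(r-1)d+j} - 2q^{(r-1)d+j-1} = 3q^{(r-1)d+j} - 2q^{(r-1)d+j-1} - q^{(r-1)d} = N,
\]
which is exactly the claimed bound $\Theta(j,d) = o(t^{N})$.

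For the exceptional case $(q,d) = (2,1)$, I would instead read the explicit formula for $\Sigma(j,1)$ given in the second part of Proposition \ref{Proposition.Vanishing-of-kappa-n-j}: the two displayed terms have $t$-orders $2^{r-1}-1$ and $2^{r+j-1} - 2^{j}$, and the smaller of these (since $j \geq 1$, one checks $2^{r-1}-1 < 2^{r+j-1} - 2^{j}$) is $2^{r-1}-1$, so $o(\Sigma(j,1)) = 2^{r-1}-1$. Plugging into \eqref{Eq.Estimate-for-Theta-j-d} with $q=2$, $d=1$ gives $o(\Theta(j,1)) = 2^{r-1}(2^{j}-1) + 2^{r-1} - 1 = 2^{r+j-1} - 1$, matching the stated value $N = 2^{r+j-1}-1$.

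There is no real obstacle here: the corollary is a direct bookkeeping consequence of the identity \eqref{Eq.Estimate-for-Theta-j-d} and the proposition. The only point requiring a moment's care is verifying the inequality $2^{r-1}-1 < 2^{r+j-1}-2^{j}$ in the exceptional case so that the leading term of $\Sigma(j,1)$ is correctly identified as the $t^{2^{r-1}-1}$ term (this uses $r \geq 3$ and $j \geq 1$, hence $2^{r+j-1} - 2^{j} \geq 2^{r-1}\cdot 2^{j} - 2^{j} = 2^{j}(2^{r-1}-1) \geq 2(2^{r-1}-1) > 2^{r-1}-1$), and keeping track of which exponents are equalities versus inequalities. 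The arithmetic simplification in the generic case is entirely routine.
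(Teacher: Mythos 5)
Your proof is correct and is exactly the paper's argument: the paper's own proof of this corollary is literally the one-line citation ``Proposition \ref{Proposition.Vanishing-of-kappa-n-j} + \eqref{Eq.Estimate-for-Theta-j-d}'', and you have simply carried out the resulting bookkeeping (including the correct identification of the leading term $t^{2^{r-1}-1}$ of $\Sigma(j,1)$ in the case $(q,d)=(2,1)$). Nothing further is needed.
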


\begin{proof}
	\ref{Proposition.Vanishing-of-kappa-n-j} + \eqref{Eq.Estimate-for-Theta-j-d}.	
\end{proof}

Now we can describe the \(t\)-expansion of the special Eisenstein series as follows.

\begin{Theorem}\label{Theorem.Expansion-for-Eqk-1}
	We have
	\begin{equation}
		E_{q^{k}-1}(\boldsymbol{\omega}) = E_{q^{k}-1}'(\boldsymbol{\omega}') - G_{q^{k}-1}(t) + o(t^{N}) = \sum_{0 \leq i \leq k} E_{q^{i}-1}'(\boldsymbol{\omega}')t^{q^{k}-q^{i}} + o(t^{N})
	\end{equation}	
	with \(N = 3(q-1)q^{k+r-2}\) for \(q>2\) and \(N = 2^{k-1}(2^{r}-1)\) for \(q=2\).
\end{Theorem}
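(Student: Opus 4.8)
The plan is to feed the known $A$-expansion from Theorem~\ref{Theorem.Eisenstein-series-Eqk-1-t-expansion} through the order estimates established in Corollary~\ref{Corollary.Better-estimate-for-Theta-j-d}, isolating the contribution of $a \in \mathds{F}^{*}$ (where $t_a = t$) from the contribution of all monic $a$ of degree $\geq 1$. Starting from
\[
	E_{q^{k}-1}(\boldsymbol{\omega}) = E_{q^{k}-1}'(\boldsymbol{\omega}') - \sum_{a \text{ monic}} G_{q^{k}-1}(t_{a}) = E_{q^{k}-1}'(\boldsymbol{\omega}') - G_{q^{k}-1}(t) - \sum_{d \geq 1}\ \sum_{\substack{a \text{ monic}\\ \deg a = d}} G_{q^{k}-1}(t_{a}),
\]
I would expand each $G_{q^{k}-1}(t_a) = \sum_{0 \leq i < k} \beta_i'\, t_a^{q^k - q^i}$ and use the identity $t_a^{q^k - q^i} = \bigl(t_a^{q^{k-i}-1}\bigr)^{q^i}$ to rewrite the degree-$d$ block of the tail as $\sum_{0 \leq i < k} \beta_i'\, \Theta(k-i,d)^{q^i}$, with $\Theta(j,d)$ as in \eqref{Eq.Definition-Theta-j-d}. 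So the whole error term is a $C_\infty$-linear (indeed $\mathcal{O}'$-linear, since $\beta_i'$ does not depend on $t$) combination of $q^i$-th powers of the $\Theta(k-i,d)$.

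Next I would bound the order of each such term. By Corollary~\ref{Corollary.Better-estimate-for-Theta-j-d}, for $(q,d) \neq (2,1)$ one has $o\bigl(\Theta(k-i,d)^{q^i}\bigr) \geq q^i\bigl(3q^{(r-1)d+k-i} - 2q^{(r-1)d+k-i-1} - q^{(r-1)d}\bigr)$; the minimum over the ranges $0 \leq i < k$ and $d \geq 1$ is attained (after checking monotonicity in both $i$ and $d$, which is the routine part) at $i = 0$, $d = 1$, giving $3q^{r-1+k} - 2q^{r-2+k} - q^{r-1} = q^{r-1}(3q^{k} - 2q^{k-1} - 1)$. Since this exceeds $3(q-1)q^{k+r-2} = q^{r-1}(3q^{k-1}(q-1)) = q^{r-1}(3q^k - 3q^{k-1})$ precisely when $3q^{k-1} - 1 > 0$, which always holds, the bound $N = 3(q-1)q^{k+r-2}$ claimed for $q > 2$ is valid (and slightly conservative). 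For $q = 2$ the degree-$1$ block must instead be handled through the explicit formula for $\Sigma(j,1)$ in Proposition~\ref{Proposition.Vanishing-of-kappa-n-j}: its leading term contributes at order $2^{r-1}-1$ in $S_a$, hence at order $2^{(r-1)}(2^{k-i}-1) + (2^{r-1}-1)$ in $\Theta(k-i,1)$ before raising to the $2^i$, and one takes the overall minimum, arriving at $N = 2^{k-1}(2^r-1)$; here I would simply carry through the arithmetic, noting that for $q=2$ the terms $\beta_i' t^{q^k-q^i}$ with $i \geq 1$ from higher-degree $a$ and the $d=1$ contributions no longer all cancel, which is exactly why the bound is weaker.

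Finally, the identification of the visible coefficients: the surviving part below order $N$ is $E_{q^{k}-1}'(\boldsymbol{\omega}') - G_{q^{k}-1}(t)$, and substituting $\beta_i' = -E_{q^i-1}'$ into \eqref{Eq.Polynomial-Gqk-1-X} gives $-G_{q^{k}-1}(t) = \sum_{0 \leq i < k} E_{q^i-1}'\, t^{q^k-q^i}$; together with the $i=k$ term $E_{q^k-1}'\cdot t^{0}$ (note $t^{q^k-q^k}=1$) coming from $E_{q^k-1}'(\boldsymbol{\omega}')$ itself, this is exactly $\sum_{0 \leq i \leq k} E_{q^i-1}'(\boldsymbol{\omega}')\, t^{q^k-q^i}$. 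The main obstacle is not conceptual but bookkeeping: one must verify that the various error terms $\Theta(j,d)^{q^i}$ never interfere with the claimed initial coefficients (they do not, since all their orders exceed $q^k-1 \geq q^k - q^0$) and that the minimum over $(i,d)$ of the order estimates is correctly located and correctly compared against the stated $N$ — a short but delicate exponent computation, with the $q=2$ case requiring separate care because of the failure of \eqref{Eq.Affine-linear-map-on-finite-dimensional-IF-space} in dimension $1$.
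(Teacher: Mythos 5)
Your overall strategy is exactly the paper's: feed the $A$-expansion of Theorem \ref{Theorem.Eisenstein-series-Eqk-1-t-expansion} through Corollary \ref{Corollary.Better-estimate-for-Theta-j-d}, writing the degree-$d$ tail as $\sum_{0\le i<k}\beta_i'\,\Theta(k-i,d)^{q^i}$. But your exponent bookkeeping contains a concrete error: the minimum of the order bounds over $i$ is \emph{not} at $i=0$. With $j=k-i$, Corollary \ref{Corollary.Better-estimate-for-Theta-j-d} gives
\[
q^{i}\cdot o\bigl(\Theta(k-i,d)\bigr)\;\ge\;q^{i}\bigl(3q^{(r-1)d+k-i}-2q^{(r-1)d+k-i-1}-q^{(r-1)d}\bigr)\;=\;3q^{(r-1)d+k}-2q^{(r-1)d+k-1}-q^{(r-1)d+i},
\]
which is \emph{decreasing} in $i$ (only the subtracted term depends on $i$, and it grows with $i$). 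So the minimum over $0\le i<k$, $d\ge1$ sits at $i=k-1$, $d=1$, where it equals $3q^{k+r-1}-2q^{k+r-2}-q^{k+r-2}=3(q-1)q^{k+r-2}=N$ exactly. This is precisely the term the paper isolates in its one-line proof, namely $\sum_{\deg a=1}t_a^{q^k-q^{k-1}}=\Theta(1,1)^{q^{k-1}}$, corresponding to the lowest-order monomial $\beta_{k-1}'X^{q^k-q^{k-1}}$ of the Goss polynomial. Your located minimum at $i=0$ gives the strictly larger value $3q^{k+r-1}-2q^{k+r-2}-q^{r-1}$ for $k\ge2$, so your verification that all error terms have order $\ge N$ checks the wrong corner and leaves the binding term unexamined; your remark that $N$ is ``slightly conservative'' is also wrong --- $N$ is exactly the order of the estimate at the true minimizer. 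The inconsistency surfaces in your own $q=2$ computation: the value $2^{k-1}(2^r-1)=2^{r+k-1}-2^{k-1}$ can only arise as $2^{i}(2^{r+k-i-1}-1)$ with $i=k-1$, not $i=0$ (which would give $2^{r+k-1}-1$).

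The gap is fixable with no new ideas: redo the monotonicity check in $i$ in the correct direction, observe that the minimum over $d$ is at $d=1$ (the positive leading term $3q^{(r-1)d+k}$ dominates as $d$ grows), and conclude that every term of the error has order at least $f(k-1,1)=N$. The rest of your write-up --- the separation of the $a\in\mathds{F}^*$ contribution, the identification $-G_{q^k-1}(t)=\sum_{0\le i<k}E_{q^i-1}'t^{q^k-q^i}$, and the observation that $N>q^k-1$ so the error cannot disturb the displayed initial coefficients --- is correct and matches the paper.
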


\begin{proof}
	Omitting in \eqref{Eq.Eisenstein-series-Eqk-1-t-expansion} the terms \(G_{q^{k}-1}(t_{a})\) with \(\deg a > 0\) leads to an error
	\[
		o\Big( \sum_{\substack{a \text{ monic}\\ \deg a = 1}} t_{a}^{q^{k}-q^{k-1}} \Big) = o(t^{q^{k-1} \cdot o(\Theta(1,1))}).
	\]	
	By Corollary \ref{Corollary.Better-estimate-for-Theta-j-d}, the exponent \(N\) results as stated.
\end{proof}

\subsection{} What can be said about the expansions of non-special Eisenstein series \(E_{k}\), where \(0 < k \equiv 0 \pmod{q-1}\)? Again,
there is an expansion like \eqref{Eq.Eisenstein-series-Eqk-1-t-expansion}
\begin{equation}
	E_{k}(\boldsymbol{\omega}) = E_{k}'(\boldsymbol{\omega}') - G_{k}(t) - \sum_{\substack{a \text{ monic} \\ \deg a \geq 1}} G_{k}(t_{a})
\end{equation}
with a monic polynomial \(G_{k}(X) \in \mathcal{O}'[X]\) of degree \(k\), see \cite{Gekeler22-3}. But for \(k\) not of the special form \(k = q^{j}-1\), \(G_{k}\)
behaves erratic, and its order \(\nu(k) \defeq o(G_{k}(X))\) doesn't grow with \(k\), as is the case with \(G_{q^{k}-1}\). At least,
\[
	\nu = \nu(k) \geq \max(2,q-1),
\]
so we have the crude estimate
\[
	o\Big( \sum_{\substack{a \text{ monic}\\ \deg a = d}} t_{a}^{\nu} \Big) = \nu q^{(r-1)d} + o \Big( \sum_{a} S_{a}^{{-}\nu} \Big)
\]
with 
\begin{align}
	o\Big( \sum_{a} S_{a}^{{-}\nu} \Big)		&\geq o(S_{a}^{{-}\nu} ) &&(\text{one fixed monic \(a\) of degree \(d\)}) \nonumber \\
											&\geq o(S_{a}) \nonumber \\
											&=q^{(r-1)d} - q^{(r-1)d-1}. \nonumber
	\intertext{Thus}
	o\Big( \sum_{\substack{a \text{ monic} \\ \deg a = d}} t_{a}^{\nu} \Big)		&= (\nu+1) q^{(r-1)d} - q^{(r-1)d-1} \nonumber \\
																				&= q^{(r-1)d+1} - q^{(r-1)d-1} \nonumber 
	\intertext{and finally}
	E_{k}(\boldsymbol{\omega})				&= E_{k}'(\boldsymbol{\omega}') - G_{k}(t) + o(t^{N})
\end{align}
with \(N = q^{r}-q^{r-2}\). This formula is meaningful for such \(k\) less than \(N\) for which \(G_{k}(X)\) is accessible.

\section{The coefficient forms \(g_{k}\)}

Our aim is to show the following formula for the starting terms of the \(t\)-expansions of the forms \(g_{k}\) (\(1 \leq k \leq r\)). As before,
\(g_{i}' = g_{i}'(\boldsymbol{\omega}')\) is the coerresponding coefficient form in rank \(r' = r-1\).

\begin{Theorem}\label{Theorem.gk-as-power-series-in-t}
	As a power series in \(t\), we have
	\begin{align*}
		g_{k} 	&= g_{k}' - {g_{k-1}'}^{q}t^{q-1} + g_{k-1}'t^{q^{k}-q^{k-1}} - {g_{k-2}'}^{q} t^{q^{k}-q^{k-1}+q-1} + g_{k-2}'t^{q^{k}-q^{k-2}} - \\
				&\qquad  \cdots - {g_{1}'}^{q} t^{q^{k}-q^{2}+q-1} + g_{1}'t^{q^{k}-q} - [1]t^{q^{k}-1} + o(t^{N})
	\end{align*}
	with \(N = 3(q-1)q^{r-1}\) if \(q>2\), \(N = 3 \cdot 2^{r-1}-1\) if \(q=2\), and \([1] = T^{q}-T\). In closed form, 
	\begin{equation}\label{Eq.Closed-form-gk}
		g_{k} = g_{k}' + \sum_{1 \leq i \leq k-1} \big( g_{i}'t^{q^{k}-q^{i}} - {g_{i}'}^{q} t^{q^{k}-q^{i+1}+q-1} \big) - [1]t^{q^{k}-1} + o(t^{N}).
	\end{equation}
\end{Theorem}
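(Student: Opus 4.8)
The plan is to prove the formula \eqref{Eq.Closed-form-gk} by induction on $k$, using the recursion \eqref{Eq.g_k}, namely $g_k = -\sum_{1\le j<k}\beta_{k-j}g_j^{q^{k-j}} - [k]\beta_k$, but adapted to the Tate-object picture; equivalently, the paper will set up a recursion \eqref{Eq.Recursion-formula-gk} expressing the rank-$r$ data $g_k$ (as functions on $\Omega$, hence power series in $t$ with coefficients in $\mathcal O'$) in terms of the rank-$r'$ data $g_j'$ and the functions $\beta_i = -E_{q^i-1}$. The first ingredient I would establish is a sufficiently precise $t$-expansion of $\beta_i = -E_{q^i-1}$: from Theorem~\ref{Theorem.Expansion-for-Eqk-1} we have $E_{q^i-1} = \sum_{0\le \ell\le i} E_{q^\ell-1}'\,t^{q^i-q^\ell} + o(t^{N_i})$ with $N_i = 3(q-1)q^{i+r-2}$, so $\beta_i = \beta_i' + \beta_{i-1}'t^{q^i-q^{i-1}} + \cdots + t^{q^i-1} + o(t^{N_i})$. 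In particular $\beta_i \equiv \beta_i' \pmod{t^{q^i-q^{i-1}}}$, which is the key low-order information.

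The second step is the structural input Proposition~\ref{Proposition.E-qi-1-in-R}: I expect one introduces a subring $R \subset \mathcal O'[[t]]$ (something like power series whose coefficients lie in the appropriate graded pieces, or with a controlled $t$-adic filtration) in which all the relevant quantities live, and proves that $E_{q^i-1}$, the $g_j'$, and the Tate-object correction terms all sit in $R$ with prescribed orders; this is what lets one multiply the various expansions and keep the error term $o(t^N)$ under control with $N = 3(q-1)q^{r-1}$. Granting this, I would plug the expansion of $\beta_{k-j}$ and the (inductively known) expansion of $g_j^{q^{k-j}}$ into the recursion. Here $g_j^{q^{k-j}}$ raises every exponent by the factor $q^{k-j}$, so the leading block $g_j' - g_{j-1}'^{q}t^{q-1}+\cdots$ of $g_j$ becomes $g_j'^{q^{k-j}} - g_{j-1}'^{q^{k-j+1}}t^{q^{k-j}(q-1)} + \cdots$; combined with $\beta_{k-j} = \beta_{k-j}' + \cdots$ one gets, at lowest order, the term $-\beta_{k-j}'g_j'^{q^{k-j}}t^0$, and these sum (over $j$, together with $-[k]\beta_k'$) to exactly $g_k'$ by the rank-$r'$ version of \eqref{Eq.g_k}. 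The remaining cross terms must be bookkept degree by degree and shown to collapse — after cancellation — into precisely the telescoping pattern $\sum_{1\le i\le k-1}(g_i't^{q^k-q^i} - g_i'^{q}t^{q^k-q^{i+1}+q-1}) - [1]t^{q^k-1}$, with everything else pushed past $t^N$.

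The main obstacle will be exactly that combinatorial cancellation: the naive expansion of $-\sum_j \beta_{k-j}g_j^{q^{k-j}} - [k]\beta_k$ produces a large number of monomials $c\cdot t^m$ with $m < N$, and the assertion of Theorem~\ref{Theorem.gk-as-power-series-in-t} is that all but the $2k-1$ (or $2r-1$) listed ones cancel. I would organize this by using the relations \eqref{Eq.Second-Coefficients-for-logarithm} and \eqref{Eq.g_k} in rank $r'$ as identities among the $\beta_i', g_j'$ to repeatedly rewrite sub-sums, rather than expanding blindly; the $q$-power structure of the exponents ($q^k-q^i$ and $q^k-q^{i+1}+q-1$ are the two "shadows" of the interval $[q^{k-j}, q^{k-j+1})$ under multiplication by $q^{k-j}$) strongly constrains which monomials can collide, and one checks that colliding contributions are negated in pairs. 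The boundary/extremal terms — the $-[1]t^{q^k-1}$ at the top (coming from the $-[k]\beta_k$ piece and the $j=k-1$ term) and the verification that the error genuinely starts at $3(q-1)q^{r-1}$ rather than earlier — need separate attention, as does the case $k=r$ where $g_r=\Delta$ and the pattern continues to $2r-1$ terms; I would treat $q=2$ (where $N$ drops to $3\cdot 2^{r-1}-1$ because of the failure of \eqref{Eq.Affine-linear-map-on-finite-dimensional-IF-space} in dimension one, cf.\ Proposition~\ref{Proposition.Vanishing-of-kappa-n-j}) as a parallel computation using the explicit $\Sigma(j,1)$ from that proposition.
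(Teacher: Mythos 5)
Your overall shape (induction on \(k\) via a recursion that expresses \(g_{k}\) through the \(g_{i}'\) and Eisenstein data, with a preliminary estimate playing the role of Proposition \ref{Proposition.E-qi-1-in-R}) matches the paper, but the substance of your plan is the route the paper explicitly considers and rejects as \enquote{hopelessly complicated}: you feed the expansion \(E_{q^{i}-1} = \sum_{0 \leq \ell \leq i} E_{q^{\ell}-1}' t^{q^{i}-q^{\ell}} + o(\cdot)\) of the rank-\(r\) \emph{modular} Eisenstein series (Theorem \ref{Theorem.Expansion-for-Eqk-1}) into the classical recursion \eqref{Eq.g_k}. The key idea you are missing is that in the Tate-object recursion \eqref{Eq.Recursion-formula-gk} the Eisenstein series that actually occur are not \(E_{q^{i}-1}(\boldsymbol{\omega})\) but the lattice sums \(E_{q^{i}-1}(L)\) for the Tate lattice \(L = \langle t^{-1} \rangle\) generated via \(\phi'\) inside \(\mathcal{R} = \mathcal{O}'((t))\), and Proposition \ref{Proposition.E-qi-1-in-R} (which is not a filtration statement about a subring, as you guessed, but a direct computation using Corollary \ref{Corollary.Better-estimate-for-Theta-j-d}) says each of these is a \emph{single monomial}, \(E_{q^{i}-1}(L) = -t^{q^{i}-1} + o(t^{N'})\) with \(N' \geq N\). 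Substituting this into \eqref{Eq.Recursion-formula-gk} collapses the recursion modulo \(t^{N}\) to
\[
	g_{k} \equiv -\sum_{1 \leq i \leq k} t^{q^{i}-1} g_{k-i}^{q^{i}} + \sum_{0 \leq i \leq k} g_{i}' t^{q^{k}-q^{i}},
\]
after which the induction is a short telescoping computation. In your version each \(\beta_{k-j}\) contributes \(k-j+1\) monomials instead of one, so the double sum produces on the order of \(k^{3}\) colliding terms.

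That is where the genuine gap lies: the entire weight of the theorem rests on your sentence \enquote{one checks that colliding contributions are negated in pairs}, for which you give no argument. The exponent bookkeeping you invoke (the two \enquote{shadows} \(q^{k}-q^{i}\) and \(q^{k}-q^{i+1}+q-1\)) constrains collisions but does not by itself produce the cancellation, and using the rank-\(r'\) identities \eqref{Eq.Second-Coefficients-for-logarithm}, \eqref{Eq.g_k} only handles the constant (\(t^{0}\)) layer, which you do correctly. Your error control is also slightly off at the margin: for \(j = k-1\) the factor \(E_{q-1}\) carries an error of exactly \(o(t^{3(q-1)q^{r-1}}) = o(t^{N})\), so there is no room to lose even a single power of \(t\) in the subsequent manipulations, whereas the paper's Proposition \ref{Proposition.E-qi-1-in-R} gives \(N' \geq N\) uniformly for all terms that enter. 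To repair your write-up you would either have to carry out the full cancellation (essentially re-deriving by hand what the Tate uniformization gives for free), or switch to the lattice \(L\) and prove \eqref{Eq.E-qi-1-in-R} first.
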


Note that \eqref{Eq.Closed-form-gk} gives the \(2k-2\) intermediate terms in reverse \(t\)-order.

\begin{Corollary}
	The coefficient of \(t^{n}\) in \(g_{k}\) (a priori a weak modular form of weight \(q^{k}-1-n\) and type \(0\) in \(\boldsymbol{\omega}'\))
	is in fact a modular form for \(n < N\) (and vanishes for \(q^{k}-1<n<N\)).	
\end{Corollary}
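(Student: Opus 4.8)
The Corollary follows from Theorem \ref{Theorem.gk-as-power-series-in-t} by combining two observations: first, the weak modularity of the coefficient forms $a_n$ of any modular form $f$ (here $f = g_k$); second, the explicit list of non-vanishing $a_n$ for $n < N$ provided by \eqref{Eq.Closed-form-gk}. The plan is to argue that each $a_n$ with $n < N$ is either identically zero (and hence trivially modular), or equals one of the listed expressions $g_i'$, ${g_i'}^q$, $[1]$, or the constant coming from $g_k'$ itself — and each of these is manifestly a modular form on $\Omega' = \Omega^{r-1}$.

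First I would recall from Remark (i) following \ref{Subsection.Lattice-Lambda-omega} that, since $g_k$ is a (weak) modular form of weight $q^k - 1$ and type $0$, its $t$-expansion coefficient $a_n(\boldsymbol{\omega}')$ is a weak modular form of weight $q^k - 1 - n$ and type $0$ in the variable $\boldsymbol{\omega}'$ — this is the ``a priori'' claim in the Corollary. The content is to upgrade ``weak modular'' to ``modular'' for $n < N$. For this I would read off from \eqref{Eq.Closed-form-gk} exactly which $n < N$ have $a_n \neq 0$: namely $n = 0$ (with $a_0 = g_k'$), the pairs $n = q^k - q^i$ (with $a_n = g_i'$) and $n = q^k - q^{i+1} + q - 1$ (with $a_n = -{g_i'}^q$) for $1 \le i \le k-1$, and $n = q^k - 1$ (with $a_n = -[1]$, a constant). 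Each of $g_k'$, $g_i'$, and ${g_i'}^q$ is a genuine modular form of type $0$ for $\GL(r-1, A)$ — indeed $g_i'$ is modular of weight $q^i - 1$ by \ref{Subsection.Lattice-Lambda-omega}, and a $q$-th power of a modular form is modular — and the constant $-[1] = T - T^q \in A \subset C_\infty$ is trivially a modular form (of weight $0$). All the remaining $a_n$ with $n < N$, and in particular all $a_n$ with $q^k - 1 < n < N$, vanish identically; this gives the parenthetical assertion. Since a weak modular form that happens to equal a modular form (or vanishes) is modular, every $a_n$ with $n < N$ is modular, which is the claim.

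The one point requiring a word of care — and the closest thing to an obstacle — is that for $q = 2$ the error bound is $N = 3\cdot 2^{r-1} - 1$, which is smaller than for $q > 2$; but the reasoning is uniform, since \eqref{Eq.Closed-form-gk} holds with the appropriate $N$ in both cases and the listed non-vanishing coefficients are modular regardless of $q$. One should also note that the weights match up consistently: the coefficient $g_i'$ sits in degree $n = q^k - q^i$, so it ought to have weight $q^k - 1 - n = q^i - 1$, which is exactly the weight of $g_i'$; similarly ${g_i'}^q$ has weight $q(q^i - 1) = q^{i+1} - q = q^k - 1 - (q^k - q^{i+1} + q - 1)$, and the constant $-[1]$ has weight $0 = q^k - 1 - (q^k - 1)$. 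These consistency checks are automatic from Remark (i) but are reassuring. No genuine difficulty arises beyond assembling these facts.
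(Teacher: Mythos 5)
Your proposal is correct and is exactly the argument the paper intends: the Corollary is stated without proof as an immediate consequence of Theorem \ref{Theorem.gk-as-power-series-in-t} together with the fact (Remark after \ref{Subsection.Lattice-Lambda-omega}) that the $t$-expansion coefficients of a weight-$(q^k-1)$, type-$0$ form are weakly modular of weight $q^k-1-n$ and type $0$, upgraded to genuine modularity because each nonzero $a_n$ with $n<N$ listed in \eqref{Eq.Closed-form-gk} is one of $g_k'$, $g_i'$, $-{g_i'}^{q}$, or the constant $-[1]$. Your weight consistency checks and the remark that $N>q^k-1$ in both the $q>2$ and $q=2$ cases are the only points needing verification, and you handle them correctly.
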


A possible approach to Theorem \ref{Theorem.gk-as-power-series-in-t} would be to use \eqref{Eq.Second-Coefficients-for-logarithm}, which recursively
determines \(g_{k}\) from the \(g_{i}\) with \(i<k\) and the \(E_{q^{j}-1}\), and our knowledge of the expansion of \(E_{q^{i}-1}\). However, the
strategy developed below is both less complicated and yields stronger results. 

\subsection{} Recall that \(\mathcal{O}'\) is the ring of holomorphic functions on 
\[
	\Omega' = \Omega^{r-1} = \{ \boldsymbol{\omega}' = (\omega_{2}, \ldots, \omega_{r}) \in C_{\infty}^{r-1} \mid \omega_{i} ~ K_{\infty}\text{-linearly independent}, \omega_{r} = 1\}.
\]
As usual, we write \(\boldsymbol{\omega} = (\omega_{1}, \boldsymbol{\omega}')\) for \(\boldsymbol{\omega} \in \Omega\). We regard the generic Drinfeld
module \(\phi = \phi^{\boldsymbol{\omega}}\) (\(\boldsymbol{\omega}\) varying through \(\Omega\)) as a Tate object (see, e.g. \cite{Papikian23}, and \cite{Gekeler88} for
the case of rank two) over the ring
\begin{equation}\label{Eq.Definition-Ring-R}
	\mathcal{R} \defeq \mathcal{O}'((t))
\end{equation}
of formal Laurent series in \(t\) over \(\mathcal{O}'\), endowed with the \(t\)-adic topology. That is, the coefficients of \(\phi\) are replaced 
with their respective \(t\)-expansions. Let \(\phi' = \phi^{\boldsymbol{\omega}'}\) be the generic Drinfeld module of rank \(r-1\) base extended
to \(\mathcal{R}\), and let
\begin{equation}
	L \defeq \langle t^{{-}1} \rangle
\end{equation}
be the \(A\)-lattice generated by \(t^{{-}1}\) via \(\phi'\), i.e.,
\[
	L = \{ \phi_{a}'(t^{{-}1}) \mid a \in A \}.
\]
Let \(a \in A\) have degree \(d\). For fixed \(\boldsymbol{\omega}\) (and thus also for \(\boldsymbol{\omega}\) varying),
\begin{align*}
	\phi_{a}'(t^{{-}1})	&= \phi_{a}'(e^{\Lambda'_{\boldsymbol{\omega}'}}(\omega_{1})) \\
						&= e^{\Lambda'_{\boldsymbol{\omega}'}}(a \omega_{1}) = t_{a}^{{-}1}(\boldsymbol{\omega}) = \Delta_{(d)}'(\boldsymbol{\omega}') t(\boldsymbol{\omega})^{{-}q^{(r-1)d}}(\boldsymbol{\omega}) S_{a}(t(\boldsymbol{\omega})).	
\end{align*}
Hence \(\phi_{a}'(t^{{-}1})\) is \enquote{large} in \(\mathcal{R}\) and grows with \(d\), and \(L\) is a lattice in the sense of 
\ref{Subsection.A-lattice}. As in \ref{Subsection.Ring-of-formal-power-series-in-non-commutative-variable}, we define the function
\begin{equation}
	\begin{split}
		e^{L} \colon \mathcal{R}	&\longrightarrow \mathcal{R}, \\
							z		&\longmapsto z \sideset{}{^{\prime}} \prod_{\lambda \in L} (1 - z/\lambda)
	\end{split}
\end{equation}
where the product converges everywhere and defines an \(\mathds{F}\)-linear function. We have properties similar to those in \ref{Subsection.A-lattice}
and \ref{Subsection.Ring-of-formal-power-series-in-non-commutative-variable}; in particular,
\begin{itemize}
	\item \(e^{L}(z) = \sum_{i \geq 0} \alpha_{i}(L)z^{q^{i}} = \sum_{i \geq 0} \alpha_{i}(L)\tau^{i} \in \mathcal{R}\{\{\tau\}\}\);
	\item there is a formal inverse \(\log^{L} = \sum_{i \geq 0} \beta_{i}(L)\tau^{i}\) of \(e^{L}\) in \(\mathcal{R}\{\{\tau\}\}\);
	\item \(\beta_{i}(L) = {-}E_{q^{i}-1}(L)\) with \(E_{q^{i}-1}(L) = \sideset{}{^{\prime}}\sum_{\lambda \in L} {\lambda'}^{1-q^{i}}\) if
	\(i > 0\) and \(E_{0}(L) = {-}1\),
\end{itemize}
and thus the relations \eqref{Eq.Coefficients-for-logarithm} for the present data. Instead of diagram \eqref{Eq.Exotic-A-module-structure}, we dispose
for each \(0 \neq a \in A\) of the commutative diagram with exact rows 
\begin{equation}
	\begin{tikzcd}
		0 \ar[r] 	& L \ar[d, "\phi_{a}'"] \ar[r]	& \mathcal{R} \ar[d, "\phi_{a}'"] \ar[r, "e^{L}"]		& \mathcal{R} \ar[d, "\phi_{a}"] \\
		0 \ar[r]	& L	\ar[r]						& \mathcal{R} \ar[r, "e^{L}"]							& \mathcal{R}.
	\end{tikzcd}
\end{equation}
That is, \(e^{L}\) is an \enquote{analytic homomorphism} of the Drinfeld module \(\phi'\) to \(\phi\). 

From \(\phi_{T} \circ e^{L} = e^{L} \circ \phi_{T}'\), we find
\begin{equation}
	\log^{L} \circ \phi_{T} = \phi_{T}' \circ \log^{L},
\end{equation}
a substitute for \eqref{Eq.Logarithm-and-Drinfeld-module}. Comparing coefficients and isolating the term \(g_{k}\) yields the recursion formula
(analogous with \eqref{Eq.g_k}):
\begin{align}\label{Eq.Recursion-formula-gk}
	g_{k} = \sum_{1 \leq i \leq k} E_{q^{i}-1}(L) q_{k-i}^{q^{i}} - \sum_{0 \leq i \leq k} g_{i}' E_{q^{k-i}-1}^{q^{i}}(L) &&(k \geq 1)
\end{align}
for the coefficients \(g_{k}\) of \(\phi_{T}\) (and, of course, the \(g_{i}'\) are those of \(\phi_{T}'\)). Our proof of Theorem 
\ref{Theorem.gk-as-power-series-in-t} will be based on \eqref{Eq.Recursion-formula-gk} and the next result.

\begin{Proposition}\label{Proposition.E-qi-1-in-R}
	In the ring \(\mathcal{R}\) we have 
	\begin{align}
		E_{q^{i}-1}(L) 	&= {-}t^{q^{i}-1} + o(t^{N}) \label{Eq.E-qi-1-in-R}
		\intertext{with}
		N				&= \begin{cases}3q^{r+i-1} - (2q^{i-1}+1)q^{r-1},	&\text{if } q > 2, \\ 2^{r+i} - 2^{r-1} - 1,		&\text{if } q=2.\end{cases}
	\end{align}	
\end{Proposition}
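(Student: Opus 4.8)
The plan is to prove \eqref{Eq.E-qi-1-in-R} by a direct attack on the defining series $E_{q^{i}-1}(L) = \sideset{}{^{\prime}}\sum_{\lambda \in L} \lambda^{1-q^{i}}$, splitting the lattice $L = \langle t^{-1}\rangle$ into contributions by degree. For $0 \neq a \in A$ of degree $d$, the lattice element $\phi_{a}'(t^{-1}) = t_{a}^{-1}(\boldsymbol{\omega}) = \Delta_{(d)}'\, t^{-q^{(r-1)d}} S_{a}(t)$ has $t$-adic valuation $-q^{(r-1)d}$ (as $S_{a}(t) = 1 + o(t^{q^{(r-1)d}-q^{(r-1)d-1}})$ is a unit), so $\lambda^{1-q^{i}}$ has valuation $(q^{i}-1)q^{(r-1)d}$. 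The $d=0$ terms run over $a = c \in \mathds{F}^{*}$, giving $\lambda = c t^{-1}$ and $\sum_{c \in \mathds{F}^{*}} (ct^{-1})^{1-q^{i}} = t^{q^{i}-1}\sum_{c}c^{1-q^{i}} = t^{q^{i}-1}\cdot(-1) = -t^{q^{i}-1}$, since $c^{1-q^{i}} = c^{q-1-(q^{i}-1) \bmod(q-1)}\cdot(\ldots)$ — more simply $c^{1-q^{i}} = c^{0} = 1$ for $c\in\mathds{F}^{*}$ when $q-1\mid q^{i}-1$, wait: $c^{q^{i}-1}=1$ so $c^{1-q^{i}}=1$, and $\sum_{c\in\mathds{F}^{*}}1 = q-1 = -1$ in characteristic $p$. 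This produces the main term $-t^{q^{i}-1}$.

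Next I would estimate the tail, i.e.\ everything of $t$-valuation strictly greater than $q^{i}-1$. The $d=1$ contribution is the real obstacle; the $d \geq 2$ terms have valuation at least $(q^{i}-1)q^{2(r-1)}$, which is comfortably $\geq N$ for both parities of $q$ (one should check this against the stated $N = 3q^{r+i-1}-(2q^{i-1}+1)q^{r-1}$, noting $r \geq 3$). For $d=1$ the element associated to $a = T+c$ is $\lambda_{c} = \Delta'\, t^{-q^{r-1}} S_{T+c}(t)$, and I must bound the valuation of $\sum_{c \in \mathds{F}} \lambda_{c}^{1-q^{i}}$. Here I would write $\lambda_{c}^{1-q^{i}} = (\Delta')^{1-q^{i}} t^{(q^{i}-1)q^{r-1}} S_{T+c}(t)^{1-q^{i}}$ and expand $S_{T+c}(t)^{1-q^{i}}$ exactly as in Section~2: $S_{T+c}^{1-q^{i}} = S_{T+c}\cdot(1 - (S_{T+c}^{q^{i}}-1) + \cdots)$, and observe — using that the coefficients of $S_{T+c}$ are affine-linear in $c$ (from \eqref{Eq.a-linear-combination} with $d=1$, $c_{0}=c$) — that the map $c \mapsto \kappa_{n,i}(T+c)$ is affine-linear for $n$ below the "quadratic threshold" $2(q^{r-1+i}-q^{r-1+i-1})$. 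By \eqref{Eq.Affine-linear-map-on-finite-dimensional-IF-space}, since $\dim_{\mathds{F}}\mathds{F} = 1$, the sum $\sum_{c\in\mathds{F}}\kappa_{n,i}(T+c)$ vanishes for such $n$ \emph{provided $q > 2$}; this is precisely the case distinction in the statement. Concretely, this is the same computation as in Proposition~\ref{Proposition.Vanishing-of-kappa-n-j} and Corollary~\ref{Corollary.Better-estimate-for-Theta-j-d} with $j = i$, $d = 1$, and it yields $o(\Sigma(i,1)) \geq 2(q-1)q^{r+i-2}$ when $q>2$, so the $d=1$ contribution to $E_{q^{i}-1}(L)$ has valuation $\geq (q^{i}-1)q^{r-1} + 2(q-1)q^{r+i-2} = q^{r+i-1}-q^{r-1}+2q^{r+i-1}-2q^{r+i-2} = 3q^{r+i-1} - q^{r-1} - 2q^{r+i-2}$, matching $N = 3q^{r+i-1}-(2q^{i-1}+1)q^{r-1}$. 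For $q = 2$ one uses instead the explicit two-term formula for $\Sigma(i,1)$ from Proposition~\ref{Proposition.Vanishing-of-kappa-n-j}, where the leading surviving term has valuation $2^{r-1}-1$, giving valuation $(2^{i}-1)2^{r-1} + 2^{r-1}-1 = 2^{r+i}-2^{r-1}-1$, the stated $N$.

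The main obstacle I anticipate is purely bookkeeping: making sure that the $d=1$ term is genuinely the binding constraint, i.e.\ that the $d \geq 2$ contributions and the "cross terms" from $S_{a}$ itself (the $\supp(S_{a})$ part of \eqref{Eq.Support-of-Sa-1-qj}, which is \emph{not} killed by summation over $c$ but sits at valuation $\geq q^{r-1}-q^{r-2}$ relative to $t^{(q^{i}-1)q^{r-1}}$) all land at valuation $\geq N$. For the $\supp(S_{a})$ part one needs $(q^{i}-1)q^{r-1} + (q^{r-1}-q^{r-2}) \geq N$; but that quantity equals $q^{r+i-1}-q^{r-1}+q^{r-1}-q^{r-2} = q^{r+i-1}-q^{r-2}$, which is smaller than $N \approx 3q^{r+i-1}$ — so this term is \emph{not} automatically beyond $N$, and one must instead argue that summing $\sum_{c}$ over the affine-linear coefficient $\kappa_{n,i}$ on \emph{all} of $\supp(S_{T+c})\cup\supp(S_{T+c}^{q^{i}}-1)$ produces cancellation, exactly as in Proposition~\ref{Proposition.Vanishing-of-kappa-n-j}. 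So the correct logical structure is: (i) isolate the $d=0$ sum as $-t^{q^{i}-1}$ exactly; (ii) for $d=1$, invoke the affine-linearity + $\sum_{v\in V}f(v)=0$ mechanism (valid for $q>2$) on the whole range $n < 2(q-1)q^{r+i-2}$, handling $q=2$ by the explicit formula; (iii) bound $d\geq 2$ crudely by valuation $(q^{i}-1)q^{2(r-1)} \geq N$ using $r \geq 3$. Assembling (i)--(iii) gives \eqref{Eq.E-qi-1-in-R} with the claimed $N$.
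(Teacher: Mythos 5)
Your argument is the same as the paper's: write \(E_{q^{i}-1}(L) = \sideset{}{^{\prime}}\sum_{a} \phi_{a}'(t^{-1})^{1-q^{i}}\), collapse the sum over \(\mathds{F}^{*}\)-multiples to the factor \(q-1 = -1\) (using \(c^{q^{i}-1}=1\)), read off the \(d=0\) term \({-}t^{q^{i}-1}\), and identify the degree-\(d\) tail with \({\Delta_{(d)}'}^{1-q^{i}} t^{q^{(r-1)d}(q^{i}-1)}\Sigma(i,d) = \Theta(i,d)\), which is exactly what Corollary \ref{Corollary.Better-estimate-for-Theta-j-d} bounds; your worry about the \(\supp(S_{a})\) part of \eqref{Eq.Support-of-Sa-1-qj} is already disposed of by Proposition \ref{Proposition.Vanishing-of-kappa-n-j}, since the affine-linear cancellation applies on all of \(\supp(S_{a}) \cup \supp(S_{a}^{q^{i}}-1)\), as you correctly conclude. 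For \(q>2\) your arithmetic reproduces the stated \(N\) exactly, and your crude valuation bound \((q^{i}-1)q^{2(r-1)} \geq N\) for \(d \geq 2\) does check out for \(r \geq 3\). The one flaw is in the \(q=2\) case: your evaluation \((2^{i}-1)2^{r-1} + 2^{r-1}-1\) equals \(2^{r+i-1}-1\), not \(2^{r+i}-2^{r-1}-1\) (the two agree only for \(i=0\)), so what your computation actually delivers for \(q=2\) is the bound \(N = 2^{r+i-1}-1\), which is weaker than the one claimed in the statement. Note that the paper's own one-line proof, which simply invokes Corollary \ref{Corollary.Better-estimate-for-Theta-j-d} (giving \(o(\Theta(i,1)) = 2^{r+i-1}-1\) when \(q=2\)), has the same gap between what is proved and what is asserted; so your slip is in forcing agreement with the stated constant rather than in the method.
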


\begin{proof}
	\begin{align*}
		E_{q^{i}-1}(L)	&= \sideset{}{^{\prime}} \sum_{\lambda \in \Lambda} \lambda^{1-q^{i}} \\
						&= \sideset{}{^{\prime}} \sum_{a} \phi_{a}'(t^{{-}1})^{1-q^{i}} \\
						&= {-}\sum_{d \in \mathds{N}_{0}} \sum_{\substack{a \text{ monic} \\ \deg a = d}} \phi_{a}'(t^{{-}1})^{1-q^{i}} \\
						&= {-}\sum_{d \in \mathds{N}_{0}} t^{{-}q^{(r-1)d}(1-q^{i})}	 {\Delta_{(d)}'}^{1-q^{i}} \sum_{\substack{a \text{ monic}\\ \deg a = d}} S_{a}(t)^{1-q^{i}} \\
						&= {-}t^{q^{i}-1} - \sum_{d \in \mathds{N}} {\Delta_{(d)}'}^{1-q^{i}} t^{q^{(r-1)d}(q^{i}-1)} \Sigma(i,d) \\
						&= {-}t^{q^{i}-1} + o(t^{N})
	\end{align*}
	by Corollary \ref{Corollary.Better-estimate-for-Theta-j-d}.
\end{proof}

\begin{proof}[Proof of Theorem \ref{Theorem.gk-as-power-series-in-t}]
	\eqref{Eq.Recursion-formula-gk} and \eqref{Eq.E-qi-1-in-R} yield
	\begin{align*}
		g_{1}	&= E_{q-1}(L) T^{q} - TE_{q-1}(L) - g_{1}'E_{0}^{q}(L) \\
				&= [1]E_{q-1}(L) + g_{1}' = g_{1}' - [1]t^{q-1} + o(t^{N})	
	\end{align*}
	in accordance with the assertion. (By the way, this agrees up to a better error term for \(q=2\) with the result Theorem 
	\ref{Theorem.Expansion-for-Eqk-1} for \(E_{q-1}\), taking \(g_{1} = [1]E_{q-1}\) into account.) Note that all the error terms
	\(o(t^{N'})\) that occur in Proposition \ref{Proposition.E-qi-1-in-R} are such that \(N' \geq N = 3(q-1)q^{r-1}\) resp.
	\(3 \cdot 2^{r-1} - 1\). Therefore, we let \enquote{\(\equiv\)} denote congruence modulo \(t^{N}\) in \(\mathcal{R}\). Assume that
	\eqref{Eq.Closed-form-gk} holds for \(k'\) with \(1 \leq k' < k\). By Proposition \ref{Proposition.E-qi-1-in-R} and 
	\eqref{Eq.Recursion-formula-gk},
	\[
		g_{k} \equiv {-}\sum_{1 \leq i \leq k} t^{q^{i}-1} g_{k-i}^{q^{i}} + \sum_{0 \leq i \leq k} g_{i}'t^{q^{k}-q^{i}}.
	\]
	From the induction hypothesis, the latter is congruent to 
	\begin{align*}
		& \sum_{0 \leq i \leq k} g_{i}'t^{q^{k}-q^{i}} - t^{q^{k}-1}T^{q^{k}} \\
		&\qquad - \sum_{1 \leq i < k} t^{q^{i}-1} \Big\{ g_{k-i}' + \sum_{1 \leq j \leq k-i} \big( g_{j}'t^{q^{k-i}-q^{j}} - g_{j}'t^{q^{k-i}-q^{j+1}+q-1} \big) - [1]t^{q^{k-i}-1} \Big\}^{q^{i}}.
	\end{align*}
	As the result of a lengthy but straightforward calculation (which we omit; several telescoping sums occur, thereby reducing the double sums to
	simple sums), this expression turns out to equal the formula \eqref{Eq.Definition-Ring-R} for \(g_{k}\).
\end{proof}

\section{The \(h\)-function}

\subsection{} We derive similar results for the \(h\)-function
\begin{equation}\label{Eq.Product-h}
	h = {h'}^{q} t \prod_{a \in A \text{ monic}} S_{a}(t)^{q^{r}-1}.
\end{equation}
For \(d \in \mathds{N}\) put
\begin{equation}
	\Pi(d)(X) \defeq \prod_{\substack{a \text{ monic} \\ \deg a = d}} S_{a}(X).
\end{equation}
For typographical reasons, we work with \(s \defeq r-1 =\) rank of \(\phi'\) instead of \(r\). For \(a = T+c\) (\(c \in \mathds{F}\)) of degree 1,
\begin{equation}
	\phi_{a}'(X) = aX + g_{1}'X^{q} + \cdots + g_{s-1}'X^{q^{s-1}} + \Delta' X^{q^{s}},
\end{equation}
where only the coefficient \(a\) of \(X\) depends on \(a\). Thus
\begin{equation}
	S_{a}(X) = 1 + \frac{g_{s-1}}{\Delta'} X^{q^{s}-q^{s-1}} + \cdots + \frac{g_{1}'}{\Delta} X^{q^{s}-q} + \frac{a}{\Delta'} X^{q^{s}-1} = S_{T}(X) + cW
\end{equation}
with \(W = {\Delta'}^{-1}X^{q^{s}-1}\). From the rule
\begin{equation}
	\prod_{c \in \mathds{F}} (x+cy) = x^{q} - xy^{q-1}
\end{equation}
we get
\begin{equation}
	\Pi(1) = S_{T}^{q} - S_{T}W^{q-1},
\end{equation}
which starts 
\begin{equation} \label{Eq.Pi-1}
	\Pi(1) = 1 - {\Delta'}^{1-q} X^{(q-1)(q^{s}-1)} + \text{higher terms}.
\end{equation}

\subsection{}\label{Subsection.Pi-2}% 
Next, we treat \(\Pi(2)\). For \(a = T^{2} + c_{1}T + c_{0}\),
\begin{equation}
	S_{a}(X) = 1 + m_{1}X^{q^{2s}-q^{2s-1}} + \cdots + m_{s-1}X^{q^{2s}-q} + \frac{a}{\Delta'} X^{q^{2s}-1}
\end{equation}
with the coefficients \(m_{n} = \frac{\prescript{}{a}\ell_{2s-n}'}{\Delta_{(2)}'}\) of \(X^{q^{2s}-n}\), where
\begin{itemize}
	\item \(\Delta_{(2)}' = (\Delta')^{q^{2s}+1}\) is independent of \(a\);
	\item \(m_{0} = 1\), \(m_{1}\), \dots, \(m_{s-1}\) are independent of \(a\);
	\item \(m_{s}, \dots, m_{2s-1}\) depend affine-linearly on \(c_{1}\), but not on \(c_{0}\).
\end{itemize}
Writing \(S_{a}(X) = Z_{0}(c_{1}) + c_{0}W_{0}\) with \(Z_{0}(c_{1}) = S_{T^{2}+c_{1}T}\), \(w_{0} = (\Delta_{(2)}')^{-1}X^{q^{2s}-1}\), we find
for fixed \(c_{1}\):
\begin{equation}
	S(c_{1}) \defeq \prod_{c_{0} \in \mathds{F}} S_{T^{2} + c_{1}T + c_{0}}(X) = Z_{0}(c_{1})^{q} - Z_{0}(c_{1})W_{0}^{q-1}.
\end{equation}
We repeat the argument and write \(S(c_{1}) = Z_{1} + c_{1}W_{1}\) with \(Z_{1}\) and \(W_{1}\) independent of \(c_{1}\). A closer look shows 
\begin{align}
	o(Z_{1} - 1)	&= o(S(c_{1}) - 1) = (q-1) \cdot o(W_{0}) = (q-1)(q^{2s}-1); \\
	o(W_{1})		&= q^{2s+1} - q^{s+1} \nonumber ,	
\end{align}
as only the coefficients \(m_{s}\), \(m_{s+1}\), \dots contribute to \(W_{1}\). Now 
\[
	\Pi(2) = \prod_{c_{1} \in \mathds{F}} S(c_{1}) = Z_{1}^{q} - Z_{1}W_{1}^{q-1},
\]
so
\[
	o(\Pi(2)-1) \geq \min\{ q \cdot o(Z_{1}-1), (q-1) \cdot o(W_{1}) \} = (q-1) \cdot o(W_{1}) = (q-1)q(q^{2s}-q^{s}),
\]
and we have in fact equality since the two arguments of min are different. Hence we have shown:

\begin{Proposition}\label{Proposition.The-product-Pi-2}
	The product \(\Pi(2) = \prod_{a \text{ monic}, \deg a = 2} S_{a}(X)\) satisfies 
	\[
		o(\Pi(2)-1) = (q-1)q(q^{2s}-q^{s}) = (q-1)(q^{2r-1}-q^{r}).
	\]	
\end{Proposition}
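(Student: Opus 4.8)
The plan is to run the same two-step ``product over $c_0$, then product over $c_1$'' argument already sketched in \ref{Subsection.Pi-2}, but making the order bookkeeping precise rather than approximate. First I would record the shape of $S_a(X)$ for $a=T^2+c_1T+c_0$: by \eqref{Eq.Definition-of-SaX} together with the fact that the coefficients of $\phi_a^{\boldsymbol{\omega}'}$ are affine-linear in $(c_0,c_1)$ (via $\phi_{T^2}'=\phi_T'\circ\phi_T'$ and $\phi_a'=\phi_{T^2}'+c_1\phi_T'+c_0$), the coefficient $m_n$ of $X^{q^{2s}-n}$ is independent of $a$ for $0\le n\le s-1$, depends on $c_1$ alone (affine-linearly) for $s\le n\le 2s-1$, and the coefficient of $X^{q^{2s}-1}$ equals $a/\Delta_{(2)}'$, hence carries the full $c_0$-dependence through the constant term $c_0$ plus a $c_1$-linear part. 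This gives the splitting $S_a=Z_0(c_1)+c_0W_0$ with $W_0=(\Delta_{(2)}')^{-1}X^{q^{2s}-1}$, so $o(W_0)=q^{2s}-1$.

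Next I would apply $\prod_{c_0\in\mathds{F}}(x+c_0y)=x^q-xy^{q-1}$ with $x=Z_0(c_1)$, $y=W_0$ to get $S(c_1)=Z_0(c_1)^q-Z_0(c_1)W_0^{q-1}$. The key order computation here is $o(S(c_1)-1)$: since $o(Z_0(c_1)-1)=o(S_a-1)=q^{2s}-q^{2s-1}$ by \eqref{Eq.SaX-for-monic-a}, we have $o(Z_0(c_1)^q-1)=q(q^{2s}-q^{2s-1})=q^{2s+1}-q^{2s}$, while $o(Z_0(c_1)W_0^{q-1})=(q-1)o(W_0)=(q-1)(q^{2s}-1)$, and the second is strictly smaller (for $s\ge1$, $q\ge2$), so $o(S(c_1)-1)=(q-1)(q^{2s}-1)$. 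Then I split $S(c_1)=Z_1+c_1W_1$: since the $c_1$-dependence of $S(c_1)$ lives only in the coefficients $m_s,\dots,m_{2s-1}$ of the $Z_0(c_1)$-part (which start at $X^{q^{2s}-q^{2s-1+s}}=X^{q^{2s}-q^{3s-1}}$\,---\,here I must recheck the exponent, but the relevant fact is simply that the lowest-order $c_1$-bearing term of $S(c_1)$ sits at order $o(W_1)$), one gets $o(W_1)=q^{2s+1}-q^{s+1}$; the exponent arises because $m_s$ contributes a term of order $q^{2s}-q^s$ in $Z_0(c_1)$, which after the $q$-th power / cross term lands at $q(q^{2s}-q^s)$, and one checks this is the genuine minimum.

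Finally, $\Pi(2)=\prod_{c_1\in\mathds{F}}S(c_1)=Z_1^q-Z_1W_1^{q-1}$, and $o(\Pi(2)-1)=\min\{q\cdot o(Z_1-1),\,(q-1)o(W_1)\}$. Plugging in $o(Z_1-1)=o(S(c_1)-1)=(q-1)(q^{2s}-1)$ gives first argument $q(q-1)(q^{2s}-1)$ and second $(q-1)(q^{2s+1}-q^{s+1})=(q-1)q(q^{2s}-q^s)$; the second is strictly smaller, so equality holds and $o(\Pi(2)-1)=(q-1)q(q^{2s}-q^s)=(q-1)(q^{2r-1}-q^r)$ after substituting $s=r-1$. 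The main obstacle is not conceptual but the careful tracking of which coefficients $m_n$ carry $c_1$-dependence and at exactly which $X$-order they first appear; getting $o(W_1)=q^{2s+1}-q^{s+1}$ right (and confirming that no cancellation among the various contributions to $W_1$ raises its order) is where the delicate work lies. I would also double-check that each ``min of two unequal quantities'' step really does have distinct arguments for all $q\ge2$, $r\ge3$, so that the bound is an equality and not merely an inequality.
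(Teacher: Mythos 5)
Your proposal is correct and follows essentially the same two-step argument as the paper (split off \(c_0\) via \(\prod_{c_0}(x+c_0y)=x^q-xy^{q-1}\), then split off \(c_1\), tracking \(o(Z_1-1)=(q-1)(q^{2s}-1)\) and \(o(W_1)=q^{2s+1}-q^{s+1}\) and taking the minimum of two unequal orders). The only blemish is the garbled intermediate exponent \(X^{q^{2s}-q^{3s-1}}\), which you yourself flag; the correct statement, which you then use, is that the lowest \(c_1\)-bearing coefficient \(m_s\) sits at \(X^{q^{2s}-q^{s}}\), with \(c_1\)-linear part \(\Delta'/\Delta'_{(2)}\neq 0\), so no cancellation occurs.
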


\begin{Remark}\label{Remark.Bound-for-Pid-1}
	Obviously, \(o(\Pi(d)-1)\) for \(d \geq 3\) is much larger; each factor \(S_{a}(X)\) satisfies \(o(S_{a}-1) = q^{(r-1)d} - q^{(r-1)d-1}\), and
	already the first step of the procedure \ref{Subsection.Pi-2} gives
	\[
		o\Big( \prod_{c_{0}} S_{a}(X) - 1 \Big) = (q-1)(q^{(r-1)d}-1)
	\]	
	for the product over all \(a = T^{d} + c_{d-1}T^{d-1} + \cdots + c_{1}T + c_{0}\) with \(c_{d-1}, \dots, c_{1}\) fixed. Hence such
	\(\Pi(d)\) don't contribute to the first \(\approx q^{3r-3}\) coefficients of the complete product \(\prod_{a \text{ monic}} S_{a}(X)\).
\end{Remark}

\begin{Theorem}\label{Theorem.h-function-as-power-series-in-t}
	The \(h\)-function as a power series in \(t\) satisfies 
	\[
		h = {h'}^{q} t \Pi(1)^{{-}1} + o(t^{N}),
	\]	
	where \(N = (q-1)(q^{2r-1}-q^{r}) + 1\) and \(\Pi(1)=\Pi(1)(t)\) is the polynomial defined by 
	\begin{align*}
		\Pi(1)(t)	&= S_{T}(t)^{q} - S_{T}(t) W(t)^{q-1}, \\
		S_{T}(t)	&= 1 + \frac{g_{r-2}'}{\Delta'} t^{q^{r-1}-q^{r-2}} + \frac{g_{r-3}'}{\Delta'} t^{q^{r-1}-q^{r-3}} + \cdots + \frac{g_{1}'}{\Delta'} t^{q^{r-1}-q} + \frac{T}{\Delta'} t^{q^{r-1}-1}, \\
		W(t)		&= {\Delta'}^{{-}1} t^{q^{r-1}-1}.	
	\end{align*}
	\(\Pi(1)\) may be inverted as a power series by \(\Pi(1)(t)^{{-}1} = 1 - U + U^{2} - \dots\) with \(U = \Pi(1)(t) - 1\).
\end{Theorem}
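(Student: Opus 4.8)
The plan is to start from the product expansion \eqref{Eq.Product-h} for \(h\) and split the infinite product over monic \(a \in A\) according to \(\deg a\), writing
\[
	h = {h'}^{q}\, t \cdot \Pi(1)(t)^{q^{r}-1} \cdot \prod_{d \geq 2} \Pi(d)(t)^{q^{r}-1}.
\]
The key observation is that by Proposition \ref{Proposition.The-product-Pi-2} we have \(o(\Pi(2)(t)-1) = (q-1)(q^{2r-1}-q^{r})\), and by Remark \ref{Remark.Bound-for-Pid-1} the orders \(o(\Pi(d)(t)-1)\) for \(d \geq 3\) are strictly larger (of size \(\approx q^{3r-3}\)). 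Hence \(\prod_{d \geq 2} \Pi(d)(t)^{q^{r}-1} \equiv 1\) modulo \(t^{N}\) with \(N = (q-1)(q^{2r-1}-q^{r})+1\), since raising a power series \(1 + o(t^{M})\) to any power leaves its order unchanged. This immediately gives
\[
	h \equiv {h'}^{q}\, t \cdot \Pi(1)(t)^{q^{r}-1} \pmod{t^{N}}.
\]

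Next I would address why the exponent \(q^{r}-1\) on \(\Pi(1)(t)\) can be replaced by \(-1\), i.e. why \(\Pi(1)(t)^{q^{r}-1} \equiv \Pi(1)(t)^{-1} \pmod{t^{N}}\). The point is the Frobenius: since \(\Pi(1)(t) = 1 + U\) with \(U = \Pi(1)(t)-1\) a power series with coefficients in \(\mathcal{O}'\) and \(o(U) = (q-1)(q^{r-1}-1)\) by \eqref{Eq.Pi-1}, we have \(\Pi(1)(t)^{q} = 1 + U^{q}\cdot(\text{something})\)—more precisely, in characteristic \(p\), the \(q\)-th power map is additive on the relevant coefficient ring only up to the fact that \(\mathcal{O}'\) is not perfect; but what we actually use is simply that \(\Pi(1)(t)^{q^{r}} = 1 + o(t^{q^{r} \cdot o(U)})\) because each term of \(U\) gets its exponent multiplied by \(q^{r}\) (the coefficients themselves are raised to the \(q^{r}\)-th power, which does not affect vanishing order). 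Since \(q^{r} \cdot o(U) = q^{r}(q-1)(q^{r-1}-1)\) is far larger than \(N\) (indeed \(q^{r}(q-1)(q^{r-1}-1) \approx q^{2r-1}(q-1) > (q-1)(q^{2r-1}-q^{r})\) comfortably), we get \(\Pi(1)(t)^{q^{r}} \equiv 1\), hence \(\Pi(1)(t)^{q^{r}-1} \equiv \Pi(1)(t)^{-1} \pmod{t^{N}}\). Combining the two congruences yields \(h \equiv {h'}^{q}\, t\, \Pi(1)(t)^{-1}\), which is the claim.

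The remaining pieces are bookkeeping: identifying \(\Pi(1)(t)\) explicitly as \(S_{T}(t)^{q} - S_{T}(t)W(t)^{q-1}\) with \(S_{T}\) and \(W\) as written, which is exactly the formula derived in the text preceding \eqref{Eq.Pi-1} via \(\prod_{c \in \mathds{F}}(x+cy) = x^{q}-xy^{q-1}\) applied to \(S_{T+c}(X) = S_{T}(X) + cW(X)\) with \(W = {\Delta'}^{-1}X^{q^{s}-1}\) and \(s = r-1\); one substitutes \(X = t\). The displayed form of \(S_{T}(t)\) is just the specialization \(a = T\) (so \(c = 0\)) of the general formula for \(S_{T+c}(X)\) recorded earlier in \ref{Subsection.Drinfeld-A-module-of-rank-r}. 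Finally, the invertibility of \(\Pi(1)(t)\) as a power series is clear since its constant term is \(1\): with \(U = \Pi(1)(t)-1\) one has \(o(U) > 0\), so the geometric series \(1 - U + U^{2} - \cdots\) converges \(t\)-adically in \(\mathcal{R}\).

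The main obstacle—to the extent there is one—is making the Frobenius argument in the second paragraph airtight over the non-perfect ring \(\mathcal{O}'\): one must be careful that it is genuinely the exponents (and not the coefficients) whose growth under \(x \mapsto x^{q^{r}}\) we are exploiting, so that no hypothesis on \(\mathcal{O}'\) being perfect is needed. Once that is cleanly stated, everything else is a direct order comparison using Proposition \ref{Proposition.The-product-Pi-2}, Remark \ref{Remark.Bound-for-Pid-1}, and \eqref{Eq.Pi-1}; no further estimates are required, in contrast with the Eisenstein and \(g_{k}\) cases where the cancellation in \(\Sigma(j,d)\) had to be exploited.
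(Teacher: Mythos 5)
Your overall route is the paper's own: split the product in \eqref{Eq.Product-h} by degree, discard the \(\Pi(d)\) with \(d\geq 2\) using Proposition \ref{Proposition.The-product-Pi-2} and Remark \ref{Remark.Bound-for-Pid-1}, and trade the exponent \(q^{r}-1\) on \(\Pi(1)\) for \(-1\) by bounding the order of \(\Pi(1)^{q^{r}}-1\). However, there is a concrete arithmetic slip at exactly the point where the bound is tight. You claim \(q^{r}(q-1)(q^{r-1}-1)\) exceeds \((q-1)(q^{2r-1}-q^{r})\) ``comfortably''; in fact \(q^{r}(q-1)(q^{r-1}-1) = (q-1)(q^{2r-1}-q^{r}) = N-1\) \emph{exactly}. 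Since the leading coefficient of \(\Pi(1)^{q^{r}}-1\) is \(({\Delta'}^{1-q})^{q^{r}}\neq 0\) by \eqref{Eq.Pi-1}, the congruence \(\Pi(1)^{q^{r}}\equiv 1 \pmod{t^{N}}\) that you assert is false: it holds only modulo \(t^{N-1}\). The same off-by-one affects your first step, since \(o(\Pi(2)^{q^{r}-1}-1) = o(\Pi(2)-1) = N-1\), so \(\prod_{d\geq 2}\Pi(d)^{q^{r}-1}\equiv 1\) only modulo \(t^{N-1}\), not modulo \(t^{N}\).

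Both claimed congruences, and hence your chain of reasoning as written, therefore fail at order \(N-1\). What rescues the theorem is the prefactor \(t\) in \(h = {h'}^{q}\,t\prod_{a}S_{a}(t)^{q^{r}-1}\): each error term is \(t\) times a series of order \(\geq N-1\), hence of order \(\geq N\). This is precisely why the paper states the error exponent as \(N = q^{r}\cdot o(\Pi(1)-1)+1\) and checks \(o(\Pi(d)-1)+1\geq N\) --- the \(+1\) is doing real work, and the margin is zero, not comfortable (both error sources sit exactly at \(N-1\) before the factor \(t\), so the stated \(N\) cannot be improved by this method). The repair is one line --- carry the factor \(t\) through the error estimates instead of asserting the congruences for the multiplicative factors alone --- but as written the key inequality is wrong. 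Your remarks on the explicit form of \(\Pi(1)\), on the irrelevance of \(\mathcal{O}'\) being perfect, and on the \(t\)-adic invertibility of \(\Pi(1)\) are all fine.
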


\begin{proof}
	We replace the product \(\prod_{a \text{ monic}} S_{a}(t)^{q^{r}-1}\) in \eqref{Eq.Product-h} with 
	\(\prod_{a \text{ monic}, \deg a = 1} S_{a}(t)^{{-}1} = \Pi(1)(t)^{{-}1}\). By Proposition \ref{Proposition.The-product-Pi-2} and 
	Remark \ref{Remark.Bound-for-Pid-1}, the error is \(o(t^{N})\) with \(N = q^{r} \cdot o(\Pi(1) - 1) + 1\) as stated, since 
	\(o(\Pi(d)-1)+1 \geq N\) for \(d \geq 2\).
\end{proof}

We note that by \eqref{Eq.Pi-1} the expansion for \(h\) starts as 
\begin{equation}
	h = {h'}^{q} t \big( 1 + (\Delta')^{1-q}t^{(q^{r-1}-1)(q-1)} + \text{higher terms} \big).
\end{equation}
The reader may verify that Theorem \ref{Theorem.h-function-as-power-series-in-t} is compatible with Theorem \ref{Theorem.gk-as-power-series-in-t}
for \(k=r\), where \(g_{r} = \Delta = ({-}1)^{r-1}h^{q-1}\).

\begin{bibdiv}
	\begin{biblist}
		\bib{Armana11}{article}{
   			AUTHOR = {Armana, C\'ecile},
    		TITLE = {Coefficients of {D}rinfeld modular forms and {H}ecke operators},
  			JOURNAL = {J. Number Theory},
 			FJOURNAL = {Journal of Number Theory},
   			VOLUME = {131},
     		YEAR = {2011},
   			NUMBER = {8},
    		PAGES = {1435--1460},
     		ISSN = {0022-314X,1096-1658},
  			MRCLASS = {11F52 (11F25)},
 			MRNUMBER = {2793886},
			MRREVIEWER = {Mihran\ Papikian},
      		DOI = {10.1016/j.jnt.2011.02.011},
      		URL = {https://doi.org/10.1016/j.jnt.2011.02.011},
		}

		\bib{BandiniValentino23}{article}{
   			AUTHOR = {Bandini, Andrea},
   			AUTHOR = {Valentino, Maria},
    		TITLE = {Fourier coefficients and slopes of {D}rinfeld modular forms},
  			JOURNAL = {Int. J. Number Theory},
 			FJOURNAL = {International Journal of Number Theory},
   			VOLUME = {19},
     		YEAR = {2023},
   			NUMBER = {10},
    		PAGES = {2523--2553},
     		ISSN = {1793-0421,1793-7310},
  			MRCLASS = {11F52 (11F30 11F33)},
 			MRNUMBER = {4672095},
			MRREVIEWER = {Fu-Tsun\ Wei},
      		DOI = {10.1142/s1793042123501245},
      		URL = {https://doi.org/10.1142/s1793042123501245},
		}

		\bib{Basson17}{article}{
   			AUTHOR = {Basson, Dirk},
    		TITLE = {A product formula for the higher rank {D}rinfeld discriminant function},
  			JOURNAL = {J. Number Theory},
 			FJOURNAL = {Journal of Number Theory},
  	 		VOLUME = {178},
     		YEAR = {2017},
    		PAGES = {190--200},
     		ISSN = {0022-314X,1096-1658},
  			MRCLASS = {11F52},
 			MRNUMBER = {3646835},
			MRREVIEWER = {Ahmad\ El-Guindy},
     	 	DOI = {10.1016/j.jnt.2017.02.010},
      		URL = {https://doi.org/10.1016/j.jnt.2017.02.010},
		}

		\bib{BassonBreuerPink24}{article}{
   			AUTHOR = {Basson, Dirk},
   			AUTHOR = {Breuer, Florian},
   			AUTHOR = {Pink, Richard},
    		TITLE = {Drinfeld modular forms of arbitrary rank},
 	 		JOURNAL = {Mem. Amer. Math. Soc.},
 			FJOURNAL = {Memoirs of the American Mathematical Society},
   			VOLUME = {304},
     		YEAR = {2024},
   			NUMBER = {1531},
    		PAGES = {viii+79},
     		ISSN = {0065-9266,1947-6221},
     		ISBN = {978-1-4704-7223-8; 978-1-4704-8010-3},
  			MRCLASS = {11F52 (11G09)},
 			MRNUMBER = {4850411},
      		DOI = {10.1090/memo/1531},
      		URL = {https://doi.org/10.1090/memo/1531},
		}

		\bib{Bosser02}{article}{
   			AUTHOR = {Bosser, Vincent},
    		TITLE = {Congruence properties of the coefficients of the {D}rinfeld modular invariant},
  			JOURNAL = {Manuscripta Math.},
 			FJOURNAL = {Manuscripta Mathematica},
   			VOLUME = {109},
     		YEAR = {2002},
   			NUMBER = {3},
    		PAGES = {289--307},
     		ISSN = {0025-2611,1432-1785},
  			MRCLASS = {11F52 (11F33 11G09)},
 			MRNUMBER = {1948016},
			MRREVIEWER = {Ignazio\ Longhi},
      		DOI = {10.1007/s00229-002-0299-3},
     	 	URL = {https://doi.org/10.1007/s00229-002-0299-3},
		}

		\bib{Choi08}{article}{
   			AUTHOR = {Choi, SoYoung},
    		TITLE = {Linear relations and congruences for the coefficients of {D}rinfeld modular forms},
  			JOURNAL = {Israel J. Math.},
 			FJOURNAL = {Israel Journal of Mathematics},
   			VOLUME = {165},
     		YEAR = {2008},
    		PAGES = {93--101},
     		ISSN = {0021-2172,1565-8511},
  			MRCLASS = {11F52},
 			MRNUMBER = {2403616},
			MRREVIEWER = {Ignazio\ Longhi},
      		DOI = {10.1007/s11856-008-1005-2},
      		URL = {https://doi.org/10.1007/s11856-008-1005-2},
		}

		\bib{ChoiIm14}{article}{
   			AUTHOR = {Choi, SoYoung},
   			AUTHOR = {Im, Bo-Hae},
    		TITLE = {The zeros of certain weakly holomorphic {D}rinfeld modular forms},
  			JOURNAL = {Manuscripta Math.},
 			FJOURNAL = {Manuscripta Mathematica},
   			VOLUME = {144},
     		YEAR = {2014},
   			NUMBER = {3-4},
    		PAGES = {503--515},
     		ISSN = {0025-2611,1432-1785},
  			MRCLASS = {11F03 (11F37)},
 			MRNUMBER = {3227524},
      		DOI = {10.1007/s00229-014-0660-3},
      		URL = {https://doi.org/10.1007/s00229-014-0660-3},
		}

		\bib{DalalKumar23}{article}{
   			AUTHOR = {Dalal, Tarun},
   			AUTHOR = {Kumar, Narasimha},
    		TITLE = {The structure of {D}rinfeld modular forms of level {$\Gamma_0(T)$} and applications},
  			JOURNAL = {J. Algebra},
 			FJOURNAL = {Journal of Algebra},
  	 		VOLUME = {619},
     		YEAR = {2023},
    		PAGES = {778--798},
     		ISSN = {0021-8693,1090-266X},
  			MRCLASS = {11F52 (11F33 11G09 13C05)},
 			MRNUMBER = {4531537},
			MRREVIEWER = {Fu-Tsun\ Wei},
      		DOI = {10.1016/j.jalgebra.2022.11.027},
      		URL = {https://doi.org/10.1016/j.jalgebra.2022.11.027},
		}

		\bib{Gekeler85}{article}{
   			AUTHOR = {Gekeler, Ernst-Ulrich},
    		TITLE = {A product expansion for the discriminant function of {D}rinfel\cprime d{} modules of rank two},
  			JOURNAL = {J. Number Theory},
 			FJOURNAL = {Journal of Number Theory},
   			VOLUME = {21},
     		YEAR = {1985},
   			NUMBER = {2},
    		PAGES = {135--140},
     		ISSN = {0022-314X,1096-1658},
  			MRCLASS = {11F99 (11R58)},
 			MRNUMBER = {808282},
			MRREVIEWER = {David\ Goss},
      		DOI = {10.1016/0022-314X(85)90046-0},
      		URL = {https://doi.org/10.1016/0022-314X(85)90046-0},
		}

		\bib{Gekeler88}{article}{
   			AUTHOR = {Gekeler, Ernst-Ulrich},
    		TITLE = {On the coefficients of {D}rinfel\cprime d{} modular forms},
  			JOURNAL = {Invent. Math.},
 			FJOURNAL = {Inventiones Mathematicae},
   			VOLUME = {93},
     		YEAR = {1988},
   			NUMBER = {3},
    		PAGES = {667--700},
     		ISSN = {0020-9910,1432-1297},
  			MRCLASS = {11F85 (11G20 11R58)},
 			MRNUMBER = {952287},
			MRREVIEWER = {David\ Goss},
      		DOI = {10.1007/BF01410204},
      		URL = {https://doi.org/10.1007/BF01410204},
		}

		\bib{Gekeler99}{article}{
   			AUTHOR = {Gekeler, Ernst-Ulrich},
    		TITLE = {Growth order and congruences of coefficients of the {D}rinfeld discriminant function},
  			JOURNAL = {J. Number Theory},
 			FJOURNAL = {Journal of Number Theory},
   			VOLUME = {77},
     		YEAR = {1999},
   			NUMBER = {2},
    		PAGES = {314--325},
     		ISSN = {0022-314X,1096-1658},
  			MRCLASS = {11F52 (11F33)},
 			MRNUMBER = {1702216},
			MRREVIEWER = {Yoshinori\ Hamahata},
      		DOI = {10.1006/jnth.1999.2387},
      		URL = {https://doi.org/10.1006/jnth.1999.2387},
		}

		\bib{Gekeler17}{article}{
   			AUTHOR = {Gekeler, Ernst-Ulrich},
    		TITLE = {On {D}rinfeld modular forms of higher rank},
  			JOURNAL = {J. Th\'eor. Nombres Bordeaux},
 			FJOURNAL = {Journal de Th\'eorie des Nombres de Bordeaux},
  		 	VOLUME = {29},
     		YEAR = {2017},
   			NUMBER = {3},
    		PAGES = {875--902},
     		ISSN = {1246-7405,2118-8572},
  			MRCLASS = {11F52 (11G09 14G22)},
 			MRNUMBER = {3745253},
			MRREVIEWER = {SoYoung\ Choi},
      		DOI = {10.5802/jtnb.1005},
      		URL = {https://doi.org/10.5802/jtnb.1005},
		}

		\bib{Gekeler22}{article}{
   			AUTHOR = {Gekeler, Ernst-Ulrich},
    		TITLE = {On {D}rinfeld modular forms of higher rank {II}},
  			JOURNAL = {J. Number Theory},
 			FJOURNAL = {Journal of Number Theory},
   			VOLUME = {232},
     		YEAR = {2022},
    		PAGES = {4--32},
     		ISSN = {0022-314X,1096-1658},
  			MRCLASS = {11F52 (11G09)},
 			MRNUMBER = {4343822},
			MRREVIEWER = {Ahmad\ El-Guindy},
      		DOI = {10.1016/j.jnt.2018.11.011},
      		URL = {https://doi.org/10.1016/j.jnt.2018.11.011},
		}

		\bib{Gekeler22-2}{article}{
  	 		AUTHOR = {Gekeler, Ernst-Ulrich},
    		TITLE = {On {D}rinfeld modular forms of higher rank {IV}: {M}odular forms with level},
  			JOURNAL = {J. Number Theory},
 			FJOURNAL = {Journal of Number Theory},
   			VOLUME = {232},
     		YEAR = {2022},
    		PAGES = {33--74},
     		ISSN = {0022-314X,1096-1658},
  			MRCLASS = {11F52 (11G09 14G35)},
 			MRNUMBER = {4343823},
			MRREVIEWER = {Maria\ Valentino},
     	 	DOI = {10.1016/j.jnt.2019.04.019},
      		URL = {https://doi.org/10.1016/j.jnt.2019.04.019},
		}

		\bib{Gekeler22-3}{article}{
   			AUTHOR = {Gekeler, Ernst-Ulrich},
    		TITLE = {Goss polynomials, {$q$}-adic expansions, and {S}heats compositions},
  			JOURNAL = {J. Number Theory},
 			FJOURNAL = {Journal of Number Theory},
   			VOLUME = {240},
     		YEAR = {2022},
    		PAGES = {196--253},
     		ISSN = {0022-314X,1096-1658},
  			MRCLASS = {11T55 (11B65 11F52 11M38 33E50)},
 			MRNUMBER = {4458239},
			MRREVIEWER = {Maria\ Valentino},
      		DOI = {10.1016/j.jnt.2022.01.008},
      		URL = {https://doi.org/10.1016/j.jnt.2022.01.008},
      		NOTE = {see also Corrigendum J. Number Theory 246 (2023) 328-331},
		}

		\bib{Gekeler25}{article}{
   			AUTHOR = {Gekeler, Ernst-Ulrich},
    		TITLE = {On {D}rinfeld modular forms of higher rank {VII}: {E}xpansions at the boundary},
  			JOURNAL = {J. Number Theory},
 			FJOURNAL = {Journal of Number Theory},
   			VOLUME = {269},
     		YEAR = {2025},
    		PAGES = {260--340},
     		ISSN = {0022-314X,1096-1658},
  			MRCLASS = {11F52 (11G16 11G18 14D22 14G22)},
 			MRNUMBER = {4833117},
			MRREVIEWER = {Fu-Tsun\ Wei},
      		DOI = {10.1016/j.jnt.2024.09.015},
      		URL = {https://doi.org/10.1016/j.jnt.2024.09.015},
		}
		
		\bib{Gekeler25-2}{misc}{
			AUTHOR = {Gekeler, Ernst-Ulrich},
			TITLE = {Modular forms for \(\GL(r,\mathds{F}_{q}[T]\): Hecke operators and growth of expansion coefficients},
			NOTE = {arXiv:2511.01712},
			YEAR = {2025},
		}

		\bib{Lopez10}{article}{
   			AUTHOR = {L\'opez, Bartolom\'e},
    		TITLE = {A non-standard {F}ourier expansion for the {D}rinfeld discriminant function},
  			JOURNAL = {Arch. Math. (Basel)},
 			FJOURNAL = {Archiv der Mathematik},
   			VOLUME = {95},
     		YEAR = {2010},
   			NUMBER = {2},
    		PAGES = {143--150},
     		ISSN = {0003-889X,1420-8938},
  			MRCLASS = {11F52 (11G09)},
 			MRNUMBER = {2674250},
			MRREVIEWER = {SoYoung\ Choi},
      		DOI = {10.1007/s00013-010-0148-7},
      		URL = {https://doi.org/10.1007/s00013-010-0148-7},
		}

		\bib{Papikian23}{book}{
   			AUTHOR = {Papikian, Mihran},
    		TITLE = {Drinfeld modules},
   			SERIES = {Graduate Texts in Mathematics},
   			VOLUME = {296},
			PUBLISHER = {Springer, Cham},
     		YEAR = {2023},
    		PAGES = {xxi+526},
     		ISBN = {978-3-031-19706-2; 978-3-031-19707-9},
  			MRCLASS = {11G09 (11-02 11R58)},
 			MRNUMBER = {4592575},
			MRREVIEWER = {Ernst-Ulrich\ Gekeler},
      		DOI = {10.1007/978-3-031-19707-9},
      		URL = {https://doi.org/10.1007/978-3-031-19707-9},
		}

		\bib{Sugiyama18}{article}{
   			AUTHOR = {Sugiyama, Yusuke},
    		TITLE = {The integrality and reduction of {D}rinfeld modular forms of arbitrary rank},
  			JOURNAL = {J. Number Theory},
 			FJOURNAL = {Journal of Number Theory},
   			VOLUME = {188},
     		YEAR = {2018},
    		PAGES = {371--391},
     		ISSN = {0022-314X,1096-1658},
  			MRCLASS = {11F52},
 			MRNUMBER = {3778640},
			MRREVIEWER = {SoYoung\ Choi},
      		DOI = {10.1016/j.jnt.2018.01.017},
     	 	URL = {https://doi.org/10.1016/j.jnt.2018.01.017},
		}

		\bib{Vicent10}{article}{
   			AUTHOR = {Vincent, Christelle},
    		TITLE = {Drinfeld modular forms modulo {$\germ p$}},
  			JOURNAL = {Proc. Amer. Math. Soc.},
 			FJOURNAL = {Proceedings of the American Mathematical Society},
   			VOLUME = {138},
     		YEAR = {2010},
   			NUMBER = {12},
    		PAGES = {4217--4229},
     		ISSN = {0002-9939,1088-6826},
  			MRCLASS = {11F52 (11F25 11F30 11F33)},
 			MRNUMBER = {2680048},
			MRREVIEWER = {Mihran\ Papikian},
      		DOI = {10.1090/S0002-9939-2010-10459-8},
      		URL = {https://doi.org/10.1090/S0002-9939-2010-10459-8},
		}
	\end{biblist}
\end{bibdiv}

\end{document}